\title[] {Realization of conditionally monotone independence and monotone products of completely positive maps}
\author{Mihai Popa}\thanks{The author was partially supported in part by a Young Investigator Award, NSF
Workshop in Analysis and Probability, Texas A\&M University, 2009}
\address{${}^1$Ben Gurion University of the Negev,
 Department of Mathematics, P.O. B. 653,
 Be'er Sheva 84105, Israel}
\email{popa@math.bgu.ac.il}\address{${}^2$Institute of Mathematics, Romanian Academy,
 P.O.Box 1-764, Bucharest, RO-70700, Romania}
\DeclareMathAlphabet{\mathpzc}{OT1}{pzc}{m}{it}
\newtheorem{claim}{}[section]
\newtheorem{defn}[claim]{Definition}
\newtheorem{thm}[claim]{Theorem}
\newtheorem{lemma}[claim]{Lemma}
\newtheorem{remark}[claim]{Remark}
\newtheorem{prop}[claim]{Proposition}
\newtheorem{cor}[claim]{Corollary}
\newcommand{\cA}{\mathcal{A}}
\newcommand{\cH}{\mathcal{H}}
\newcommand{\iB}{\textit{B}}
\newcommand{\cB}{\mathcal{B}}
\newcommand{\gB}{\mathfrak{B}}
\newcommand{\gD}{\mathfrak{D}}
\newcommand{\gA}{\mathfrak{A}}
\newcommand{\cL}{\mathcal{L}}
\newcommand{\free}{\text{\huge{$\ast$}}}
\newcommand{\lra}{\longrightarrow}
\newcommand{\Free}{\underset{\small{i\in I}}{\free}}
\newcommand*{\longhookrightarrow}{\ensuremath{\lhook\joinrel\relbar\joinrel\rightarrow}}
\newcommand{\wE}{\widetilde{E}}
\begin{document}

\maketitle
\bibliographystyle{alpha}

\begin{abstract} The paper gives an operator algebras model for the conditional monotone independence, introduced by T. Hasebe. The construction is used to prove an embedding result for the N. Muraki's monotone product of C$^\ast$-algebras. Also, the formulas from the definition of conditional monotone independence are used to define the monotone product of maps which is shown to preserve complete positivity, a similar to the results from the case of free products. 

\end{abstract}

\section{Introduction}

 This material presents some results in monotone probability, a non-unital and non-symmetric type of non-commutative probability. More precisely, if $(\cA, \psi)$ is a non-commutative probability space and $I$ a totally ordered set, then a family of subalgebras $\{\cA_i\}_{i\in I}$ of $\cA$ is said to be monotone independent with respect to $\psi$ if for any $a_1, \dots, a_n$ with $a_k\in\cA_{i_k}$ such that $i_s\neq i_{s+1}$, the following properties are satisfied:
   \begin{enumerate}
\item[(1)]$\psi(a_1\cdots a_n)=\psi(a_1)\psi(a_2\cdots a_n)$ if $i_1>i_2$.
\item[(2)]$\psi(a_1\cdots a_n)=\psi(a_1\cdots a_{n-1})\psi(a_n)$ if $i_n>i_{n-1}$.
\item[(3)]$\psi(a_1\cdots a_n)=\psi(a_1\cdots a_{k-1}\psi(a_k)a_{k+1}\cdots a_n)$, if $i_{k-1}<i_k>i_{k+1}.$
  \end{enumerate}

   Many results from the free probabilities theory have non-trivial monotone independence analogues - the monotone Fock space, respectively bimodule of \cite{muraki} and \cite{mvPJM} are counterparts to the full Fock space (\cite{voiculescubook}, \cite{speicherhab}), the $H$- and $K$-transforms from \cite{muraki} and \cite{uwefranz} are analogue to the Voiculescu's $R$- and $S$-transforms etc.
  In \cite{hasebe}, T. Hasebe introduced the notion of conditionally monotone independence, in analogy to the notion of conditionally freeness from \cite{bozejko1}, \cite{bozejko2}. More precisely, if $\cA$ is a unital algebra endowed with two normalized linear functionals $\varphi$ and $\psi$, a family of subalgebras $\{\cA_i\}_{i\in I}$ of $\cA$ is said to be conditionally monotone independent if they are monotone independent with respect to $\psi$ and for any $a_1, \dots, a_n$ with $a_k\in\cA_{i_k}$ such that $i_s\neq i_{s+1}$, we have that
  \begin{enumerate}
\item[($1^\prime$)]$\varphi(a_1\cdots a_n)=\varphi(a_1)\varphi(a_2\cdots a_n)$ if $i_1>i_2$.
\item[($2^\prime$)]$\varphi(a_1\cdots a_n)=\varphi(a_1\cdots a_{n-1})\varphi(a_n)$ if $i_n>i_{n-1}$.
\item[($3^\prime$)]$\varphi(a_1\cdots a_n)=\varphi(a_1\cdots a_{k-1})[\varphi(a_k)-\psi(a_k)]
  \varphi(a_{k+1}\cdots a_n)+$\\
   ${}$\hspace{2.6cm} $\varphi(a_1\cdots a_{k-1}\psi(a_k)a_{k+1}\cdots a_n)$, if $i_{k-1}<i_k>i_{k+1}$
   \end{enumerate}

  In the theory of conditional freeness there is a Fock model, presented in \cite{bozejko1}. Also , there is an important connection between conditional freeness and complete positive maps: in \cite{boca1}, \cite{boca2} and \cite{dykema1}, it is shown how the relations from the definition of the conditional freeness appear in the construction of the free product of completely positive maps, which turns to  also be complete positive. The present material addresses this topics for the case of conditionally monotone independence.

  In Section 2 we will present a operator algebraic model for the conditionally monotone independence using the ``monotone Fock space'' introduced in \cite{muraki} and the ideas from the Fock model for conditionally freeness from \cite{bozejko1}. In Section 3 we construct the monotone product of maps, and using the results and some techniques from \cite{boca1} and \cite{hasebe} we prove that a monotone product of completely positive maps is completely positive. In Section 4 we define the montone product of C$^\ast$-algebras with conditional expectation, refining the construction from \cite{muraki} and prove some embedding results similar to the one from Sections 1 and 2 of \cite{dykema1}.

\section{Realization of conditionally monotone independence}\label{monotcalg}

 Let $\{(\mathcal{A}_i,\varphi_i,\psi_i)\}_{i\in I}$ be a family of $\ast$-algebras, each endowed with two states (throuout the paper  $I$ will always be a totally ordered set). If $I$ has a minimal element, $0_I$, since in the definition of the conditional monotone independence the functional $\psi_{0_i}$ does not appear, we will also suppose that $\varphi_{0_I}=\psi_{0_I}$.

 As in \cite{bozejko1}, for each $j\in I$ we  consider $\pi_j,\sigma_j:\mathcal{A}_j\lra\textit{B}(\mathcal{H}_j)$ be the $\ast$-representations given by the GNS-constructions with states $\varphi_j, \psi_j$,  respectively, i. e.
 \[
  \varphi_j(a_j)=\langle\pi_j (a_j) \xi_j, \xi_j\rangle\ \text{and}\ \psi_j(a_j)=\langle\sigma_j(a_j)\xi_j, \xi_j\rangle \hspace{2cm}
 \]
 with $a_j\in\mathcal{A}_j$ and $||\xi_j||=1$. As remarked in \cite{bozejko1}, we can always choose the same vector $\xi_j$ for both states, but by doing so we may lose the cyclicity of $\xi_j$.

  Let $(\mathcal{H},\xi)$ be the monotone product of the family $\{(\mathcal{H}_j, \xi_j)\}_{j\geq 0}$ (see \cite{muraki}, \cite{mvPJM}):
  \[
  \mathcal{H}=\mathbb{C}\xi\oplus\bigoplus_{n=1}^\infty
  \LARGE(
  \bigoplus_{i_1>\dots>i_n} \mathcal{H}_{i_1}^\circ\otimes\cdots \otimes \mathcal{H}_{i_n}^\circ
  \LARGE)
  \]
where $\mathcal{H}_j^\circ=\mathcal{H}_j\ominus \mathbb{C}\xi_j$.

  We also define
  \[
  \mathcal{H}(k)=\mathbb{C}\xi\oplus\bigoplus_{n=1}^\infty
  \big(
  \bigoplus_{\substack{i_1>\dots>i_n\\ i_1\leq k}} \mathcal{H}_{i_1}^\circ\otimes\cdots \otimes \mathcal{H}_{i_n}^\circ
  \big)
  \]
 and consider the adjointable partial isometries $V_k:\mathcal{H}\lra \mathcal{H}_k\otimes \mathcal{H}(k-1)$ given by $V_k\xi=\xi_k\otimes\xi$ and, for $f_1\otimes\cdots\otimes f_n\in\mathcal{H}_{i_1}^\circ\otimes\cdots \otimes \mathcal{H}_{i_n}^\circ$,
 \[
  V_kf_1\otimes\cdots\otimes f_n=
  \left\{
  \begin{array}{lcc}
  0,& \text{if}\ i_1>k\\
  f_1\otimes\cdots\otimes f_n, & \text{if}\ i_1=k\\
  \xi_k\otimes f_1\otimes\cdots\otimes f_n, &  \text{if}\ i_1<k.
  \end{array}
  \right.
\]

 If $T\in\textit{B}(\mathcal{H}_k)$, we define
  $\omega_k(T)=V_k^\ast ( T\otimes \text{Id}_{\cH(k)})V_k$;
  a trivial computation gives that $\omega_k(T_1T_2)=\omega_k(T_1)\omega_k(T_2)$.
   We will consider the $\ast$-representation
  $j_k:\cA_k\lra \iB(\cH)$
 \[
 j_k(a)=\omega_k(\pi_k(a))P_k\oplus\omega_k(\sigma_k(a))P_k^{\bot}
 \]
where $P_k$ is the orthogonal projection on $\mathbb{C}\xi\oplus\cH_k$.

 Finally, let  $\Phi$ be the state on $\iB(\cH)$ given by $\Phi(T)=\langle T\xi, \xi\rangle$.

 \begin{remark}\label{remark1}
  From the definition of $\omega_i$ and $\pi_i$, we have that $\Phi\circ j_i=\varphi_{i}$ for all $i\in I$.
 \end{remark}
 \begin{lemma}\label{lemma1}
  Suppose that $i_k\neq i_{k+1}$ for $1\leq k < n$, that $a_k\in\cA_{i_k}$ and \\ $A_k=j_{i_k}(a_k)$. Then
  \[
  A_1\cdots A_n\xi=\Phi(A_1\cdots A_n)\xi+\eta\ \ \ \text{for some}\ \eta\in \cH(k)^\circ=\cH(k)\ominus\mathbb{C}\xi.
  \]
 \end{lemma}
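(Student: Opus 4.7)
The plan is induction on $n$. For the base case $n=1$, since $P_{i_1}\xi=\xi$, we have
\[
A_1\xi=j_{i_1}(a_1)\xi=\omega_{i_1}(\pi_{i_1}(a_1))\xi=V_{i_1}^{\ast}\bigl(\pi_{i_1}(a_1)\xi_{i_1}\otimes\xi\bigr).
\]
Writing $\pi_{i_1}(a_1)\xi_{i_1}=\varphi_{i_1}(a_1)\xi_{i_1}+(\pi_{i_1}(a_1)\xi_{i_1})^{\circ}$ along the decomposition $\cH_{i_1}=\mathbb{C}\xi_{i_1}\oplus \cH_{i_1}^{\circ}$ and applying $V_{i_1}^{\ast}$ termwise (using $V_{i_1}^{\ast}(\xi_{i_1}\otimes\xi)=\xi$ and $V_{i_1}^{\ast}(f\otimes\xi)=f$ for $f\in\cH_{i_1}^{\circ}$) yields $A_1\xi=\varphi_{i_1}(a_1)\xi+(\pi_{i_1}(a_1)\xi_{i_1})^{\circ}$; by Remark \ref{remark1} the scalar equals $\Phi(A_1)$, and the remainder lies in $\cH_{i_1}^{\circ}\subset\cH(i_1)^{\circ}$.

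For the inductive step, write the inductive hypothesis as $A_2\cdots A_n\xi=\Phi(A_2\cdots A_n)\xi+\eta'$ and substitute the base-case expression for $A_1\xi$:
\[
A_1\cdots A_n\xi=\Phi(A_1)\Phi(A_2\cdots A_n)\xi+\Phi(A_2\cdots A_n)(\pi_{i_1}(a_1)\xi_{i_1})^{\circ}+A_1\eta'.
\]
The middle summand already sits in $\cH_{i_1}^{\circ}$, so the remaining task is to analyze $A_1\eta'$ by splitting $\eta'$ according to the leftmost tensor-factor index of its summands. Components whose leftmost index is strictly greater than $i_1$ are annihilated by $A_1$ (since both $V_{i_1}$ and $P_{i_1}$ vanish on such vectors); components whose leftmost index is strictly less than $i_1$ satisfy $P_{i_1}\eta'=0$ and $V_{i_1}\eta'=\xi_{i_1}\otimes\eta'$, so using $\sigma_{i_1}(a_1)\xi_{i_1}=\psi_{i_1}(a_1)\xi_{i_1}+(\sigma_{i_1}(a_1)\xi_{i_1})^{\circ}$ one computes $A_1\eta'=\psi_{i_1}(a_1)\eta'+(\sigma_{i_1}(a_1)\xi_{i_1})^{\circ}\otimes\eta'$, both pieces in $\cH(i_1)^{\circ}$; and components whose leftmost index is exactly $i_1$ have to be split further between the $1$-tensor part (on which $P_{i_1}$ is the identity and $\pi_{i_1}$ acts) and longer tensors (on which $P_{i_1}$ vanishes and $\sigma_{i_1}$ acts).

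Since by definition $\Phi(X)=\langle X\xi,\xi\rangle$, the orthogonality $\eta\perp\xi$ is automatic once the computation is completed; the substantive content of the lemma is the localization of $\eta$ in a specific $\cH(k)^{\circ}$, the natural target being $k=i_1$ because $A_1=j_{i_1}(a_1)$ caps the leftmost tensor index of any output at $i_1$. The main obstacle is the bookkeeping in the subcases above, especially when $\eta'$ has a component with leftmost index equal to $i_1$: all four resulting pieces (the $\pi_{i_1}$-image of the $1$-tensor part and the $\sigma_{i_1}$-image of the longer-tensor part, each decomposed along $\cH_{i_1}=\mathbb{C}\xi_{i_1}\oplus\cH_{i_1}^{\circ}$) must be tracked in order to confirm that the $\xi$-coefficients combine to $\Phi(A_1\cdots A_n)$ and that the remaining terms remain inside $\cH(i_1)^{\circ}$.
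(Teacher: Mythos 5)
Your proposal is correct and follows essentially the same route as the paper's own proof: induction on $n$, the identical base-case computation $A_1\xi=\varphi_{i_1}(a_1)\xi+P_{\xi_{i_1}}^\bot\pi_{i_1}(a_1)\xi_{i_1}$, a decomposition of $A_2\cdots A_n\xi$ according to the leftmost tensor index (the paper's $\eta_1+\eta_2+\alpha\xi$ split), and the observation that once $A_1\cdots A_n\xi\in\mathbb{C}\xi\oplus\cH(i_1)^\circ$ is established, the coefficient of $\xi$ is automatically $\Phi(A_1\cdots A_n)$ since $\Phi(T)=\langle T\xi,\xi\rangle$. Your finer three-way split of $\eta'$ by leftmost index is in fact slightly more careful than the paper's displayed computation of $A_1\eta_2$ (whose formula $V_{i_1}^\ast(\sigma_{i_1}(a_1)\xi_{i_1}\otimes\eta_2)$ is literally valid only for components with leading index strictly below $i_1$), and the residual bookkeeping you defer in the leading-index-$i_1$ subcase is the same routine verification the paper also leaves implicit.
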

 \begin{proof} Induction on $n$. For $n=1$, we have
 \begin{eqnarray*}
 A_1\xi&=&\omega_{i_1}(\pi_{i_1}(a_1))P_{i_1}\xi\oplus\omega_{i_1}(\sigma_{i_1}(a_1))P_{i_1}^{\bot}\xi\\
 &=&\omega_{i_1}(\pi_{i_1}(a_1))\xi, \ \text{since}P_{i_1}^{\bot}\xi=0 \\
 &=& V_{i_1}^\ast [\pi_{i_1}(a_1)\xi_{i_1} ]\otimes\xi\\
 &=&V_{i_1}^\ast [\langle\pi_{i_1}(a_1)\xi_{i_1}, \xi_{i_1}\rangle\xi_{i_1} + P_{\xi_{i_1}}^\bot\pi_{i_1}(a_1)\xi_{i_1}  ]\otimes\xi\\
 &=&V_{i_1}^\ast [\varphi_{i_1}\xi_{i_1} + P_{\xi_{i_1}}^\bot\pi_{i_1}(a_1)\xi_{i_1}  ]\otimes\xi\\
 &=& \varphi_{i_1}\xi + P_{\xi_{i_1}}^\bot\pi_{i_1}(a_1)\xi_{i_1}
 \end{eqnarray*}

\noindent where $P_{\xi_{i_1}}$ is the orthogonal projection on $\mathbb{C}\xi_{i_1}$. The conclusion follows now from Remark \ref{remark1}.

 For the induction step, we first write
 $
 A_2\cdots A_n\xi=\eta_1+\eta_2+\alpha\xi
 $
 with $\eta_1\in\cH_{i_1}^\circ$, $\eta_2\in\cH(i_2)^\circ\ominus\cH_{i_1}^\circ$ and $\alpha\in\mathbb{C}$.

  The argument above gives
  $
  A_1(\alpha\xi)=\alpha(\zeta_1+\alpha_0\xi)
  $ with $\zeta_1\in\cH_{i_1}^\circ$ and $\alpha_0\in\mathbb{C}$.

  On the other hand,
  \[A_1\eta_1=P_{\xi_{i_1}}^\bot A_1\eta_1+\alpha_1\xi\ \ \text{with}\ \ \alpha_1\in\mathbb{C}\ \text{and}\ P_{\xi_{i_1}}^\bot A_1\eta_1\in\cH_{i_1}^\circ
  \]
and
\begin{eqnarray*}
A_1\eta_2&=& V_{i_1}^\ast(\sigma_{i_1}(a_1)\otimes\text{Id})V_{i_1}\eta_2\\
&=&V_{i_1}^\ast(\sigma_{i_1}(a_1)\xi_{i_1}\otimes\eta_2)\in\cH(i_1)^\circ.
\end{eqnarray*}
Summing, we obtain $A_1\cdots A_n\xi=\eta+\beta\xi$, with $\beta\in\mathbb{C}$ and $\eta\in\cH(i_1)^\circ$, and since $\Phi(T)=\langle T\xi, \xi\rangle$, the result is proved.
\end{proof}
 \begin{thm}
   With the notations from above, if $i_k\neq i_{k+1}$, $(k=1, \dots, n-1)$, and $a_k\in\cA_{i_k}$, then:
   \begin{enumerate}
   \item[(i)] for $i_1>i_2$, we have that
   \[
   \Phi\Big(j_{i_1}(a_1)\cdots j_{i_n}(a_n)\Big)=\varphi_{i_1}(a_1)\Phi\Big(j_{i_2}(a_2)\cdots j_{i_n}(a_n)\Big)
   \]
   \item[(ii)] for $i_n>i_{n-1}$, we have that
   \[
    \Phi\Big(j_{i_1}(a_1)\cdots j_{i_n}(a_n)\Big)=
    \Phi\Big(j_{k_1}(a_1)\cdots j_{k_{n-1}}(a_{n-1})\Big)\varphi_{i_n}(a_n)
   \]

   \item[(iii)] for  $i_{l-1}<i_l>i_{l+1}$ (for some $1<l<n$), we have that
   \begin{eqnarray*}
   \Phi\Big(j_{k_1}(a_1)\cdots j_{k_n}(a_n)\Big)&=&
   \Phi\Big(j_{k_1}(a_1)\cdots j_{k_{l-1}}(a_{l-1})
   \psi(a_l)
   j_{l+1}(a_{l+1})\cdots j_{k_n}(a_n))\Big)+ \\
   &&\hspace{-3cm}\Phi\Big(j_{k_1}(a_1)\cdots j_{k_{l-1}}(a_{l-1})\Big)
   \Big[\varphi_{k_l}(a_l)-\psi_{k_l}(a_l)\Big]
   \Phi\Big(j_{l+1}(a_{l+1})\cdots j_{k_n}(a_n))\Big).
   \end{eqnarray*}
\end{enumerate}
 \end{thm}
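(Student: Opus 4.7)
The plan is to compute $A_1 \cdots A_n \xi$ explicitly, exploiting the split $j_k(a) = \omega_k(\pi_k(a))P_k + \omega_k(\sigma_k(a))P_k^\perp$ and the action of $V_k$ on simple tensors. Three primitive formulas, immediate from the definitions and already appearing inside the proof of Lemma \ref{lemma1}, will be invoked throughout: for any $a \in \cA_k$, $(\alpha)$ $j_k(a)\xi = \varphi_k(a)\xi + g$ with $g \in \cH_k^\circ$; $(\beta)$ if $f \in \cH$ has leading tensor factor in some $\cH_{i_1}^\circ$ with $i_1 < k$, then $j_k(a)f = \psi_k(a)f + h \otimes f$ for some $h \in \cH_k^\circ$; and $(\gamma)$ if the leading index satisfies $i_1 > k$, then $P_k f = 0$ and $V_k f = 0$, hence $j_k(a) f = 0$.

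Parts (i) and (ii) are short. For (i), apply Lemma \ref{lemma1} to the tail to write $A_2 \cdots A_n \xi = \Phi(A_2 \cdots A_n)\xi + \eta$ with $\eta \in \cH(i_2)^\circ$; since $i_1 > i_2$, rule $(\beta)$ applies to every tensor in $\eta$ and rule $(\alpha)$ to $\xi$, and pairing $A_1(\Phi(A_2 \cdots A_n)\xi + \eta)$ with $\xi$ retains only $\varphi_{i_1}(a_1)\Phi(A_2 \cdots A_n)$. For (ii), $A_n\xi = \varphi_{i_n}(a_n)\xi + g_n$ with $g_n \in \cH_{i_n}^\circ$; the hypothesis $i_n > i_{n-1}$ places $g_n$ in the scope of rule $(\gamma)$ for $A_{n-1}$, so $A_{n-1} g_n = 0$, and therefore $A_1 \cdots A_n \xi = \varphi_{i_n}(a_n) A_1 \cdots A_{n-1}\xi$; pairing with $\xi$ gives the claim.

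The substance is (iii). Set $B = A_{l+1}\cdots A_n$ and $C = A_1\cdots A_{l-1}$, and use Lemma \ref{lemma1} to get $B\xi = \Phi(B)\xi + \eta$, where $\eta$ is a sum of tensors whose leading index is at most $i_{l+1} < i_l$. Combining rule $(\alpha)$ applied to $\xi$ and rule $(\beta)$ applied to $\eta$ (applicable because every leading index in $\eta$ is strictly less than $i_l$), then regrouping so that the multiple of $B\xi$ is visible, one obtains
\begin{equation*}
A_l B\xi \;=\; \psi_{i_l}(a_l)\, B\xi \;+\; [\varphi_{i_l}(a_l)-\psi_{i_l}(a_l)]\,\Phi(B)\,\xi \;+\; \Phi(B)\, g_l \;+\; h_l \otimes \eta,
\end{equation*}
where $g_l = P_{\xi_{i_l}}^\perp\pi_{i_l}(a_l)\xi_{i_l}$ and $h_l = P_{\xi_{i_l}}^\perp\sigma_{i_l}(a_l)\xi_{i_l}$ both lie in $\cH_{i_l}^\circ$. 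The two remainder vectors $g_l$ and $h_l \otimes \eta$ have leading tensor factor in $\cH_{i_l}^\circ$, and since $i_l > i_{l-1}$ rule $(\gamma)$ shows that $A_{l-1}$, and therefore $C$, kills each. Applying $C$ to the displayed identity and pairing with $\xi$ leaves exactly $\psi_{i_l}(a_l)\Phi(CB) + [\varphi_{i_l}(a_l)-\psi_{i_l}(a_l)]\Phi(C)\Phi(B)$, which is the right-hand side of (iii) once the scalar $\psi_{i_l}(a_l)$ is factored out of $\Phi(CB)$. The main technical obstacle is precisely this bookkeeping step: one must see that everything in $A_l B\xi$ that is not of the form $\psi_{i_l}(a_l) B\xi$ or a scalar multiple of $\xi$ lands in a subspace on which $C$ vanishes.
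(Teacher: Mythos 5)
Your proposal is correct and takes essentially the same route as the paper: your rules $(\alpha)$--$(\gamma)$ are precisely the computations carried out inside the paper's proof of Lemma \ref{lemma1} and of the theorem, and your regrouping of $A_lB\xi$ into $\psi_{i_l}(a_l)B\xi+[\varphi_{i_l}(a_l)-\psi_{i_l}(a_l)]\Phi(B)\xi$ plus terms with leading factor in $\cH_{i_l}^\circ$ killed by $A_{l-1}$ is exactly the paper's decomposition $\zeta_2\otimes\eta+\zeta_2\alpha+\beta_2\eta+\beta_2\alpha\xi+(\zeta_1-\zeta_2)\alpha+(\beta_1-\beta_2)\alpha\xi$. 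The only difference is presentational: you isolate the three primitive actions as standing lemmas rather than re-deriving them inline.
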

 \begin{proof}
  First, to simply the notations, we will write $A_l$ for $j_{i_l}(a_l),\ 1\leq l\leq n$.

  From Lemma \ref{lemma1} and Remark \ref{remark1}, we have that
  $A_n\xi=\eta+\varphi_{i_n}(a_n)\xi$ with $\eta\in\cH_{i_n}^\circ$. Since $i_n>i_{n-1}$, the definition of $V_{i_{n-1}}$ gives
  \begin{eqnarray*}
  \Phi(A_1\cdots A_n)&=&\langle A_1\cdots A_{n-1}\varphi_{i_n}(a_n)\xi, \xi\rangle\\
  &=&\langle A_1\cdots A_{n-1}\xi, \xi\rangle\varphi_{i_n}(a_n)
  \end{eqnarray*}
  so part (ii) is done.

  For part (i), Lemma \ref{lemma1} gives
  \[
  A_2\cdots A_n\xi=\eta+\alpha\xi,
   \]
   with $\alpha=\Phi(A_2\cdots A_n)\in\mathbb{C}$ and $\eta\in\cH(i_2)^\circ$. Since
   \begin{eqnarray*}
   A_1\eta&=&V_{i_1}(\sigma_{i_1}(a_1)\otimes\text{Id})\xi_{i_1}\otimes\eta\\
   &=&V_{i_1}(\sigma_{i_1}(a_1)\xi_{i_1}\otimes\eta)\in\cH(i_1)^\circ,
   \end{eqnarray*}
   we have that
   \begin{eqnarray*}
   \Phi(A_1\cdots A_n)&=&\langle A_1 \alpha\xi, \xi\rangle\\
   &=&\langle A_1 \xi, \xi\rangle\alpha\\
   &=&\varphi_{i_1}(A_1)\Phi(A_2\cdots A_n).
   \end{eqnarray*}

   For part (iii), write $A_{l+1}\cdots A_n\xi=\eta+\alpha\xi$, with $\eta\in\cH(l+1)^\circ$ and $\alpha=\Phi(A_{l+1}\cdots A_n)\in\mathbb{C}$. Also write
   \begin{eqnarray*}
   \pi_{i_l}(a_l)\xi_{i_l}&=&\zeta_1+\beta_1\xi_{i_l}\\
   \sigma_{i_l}(a_l)\xi_{i_l}&=&\zeta_2+\beta_2\xi_{i_l}
   \end{eqnarray*}
   with $\beta_1=\varphi_{i_l}(a_l)$, $\beta_2=\psi_{i_l}(a_l)$ and  $\zeta_1, \zeta_2\in\cH_l^\circ$.

   We have that
   \begin{eqnarray*}
   A_lA_{l+1}\cdots A_n\xi&=&A_l(\eta+\alpha\xi)\\
   &=&
   V_{i_l}(\sigma_{i_l}(a_l)\xi_{i_l}\otimes\eta)+
   V_{i_l}(\pi_{i_l}(a_l)\alpha\xi_{i_l}\otimes\xi)\\
   &=&
   V_{i_l}\big([\zeta_2+\beta_2\xi_{i_l}]\otimes\eta+
   [\zeta_1\alpha+\beta_1\alpha\xi_{i_l}]\otimes\xi\big)\\
   &=&
    \zeta_2\otimes\eta+ \beta_2\eta+\zeta_1\alpha+\beta_1\alpha\xi\\
   &=&
    \zeta_2\otimes\eta+  \zeta_2\alpha+\beta_2\eta+\beta_2\alpha\xi +(\zeta_1-\zeta_2)\alpha+(\beta_1-\beta_2)\alpha\xi
   \end{eqnarray*}
   Since $i_l>i_{l-1}$ and $\zeta_1, \zeta_2\in\cH_{i_l}^\circ$, it follows that $A_{l-1}(\zeta_2\otimes\eta+\zeta_2\alpha+(\zeta_1-\zeta_2)\alpha)=0$, therefore
   \begin{eqnarray*}
   \Phi(A_1\cdots A_n)
   &=&
   \langle A_1\cdots A_{l-1} A_l(A_{l+1}\cdots A_n\xi),\xi\rangle\\
   &=&
   \langle  A_1\cdots A_{l-1}(\beta_2[\eta+\alpha\xi]+(\beta_1-\beta_2)\alpha\xi) , \xi\rangle\\
   &=&
   \langle  A_1\cdots A_{l-1}\beta_2 A_{l+1}\cdots A_n\xi, \xi\rangle+\\
   &&\langle  A_1\cdots A_{l-1}\xi , \xi\rangle(\beta_1-\beta_2)\alpha
   \end{eqnarray*}
  hence q.e.d..
 \end{proof}

     \section{Monotone products of completely positive maps}

    Let $\{\gA_i\}_{i\in I}$ be a family of $\ast$-algebras containing a C$^\ast$-algebra $\gB$ as a common $\ast$-subalgebra and suppose that each $\gA_i$ is endowed with a projection $\psi_i:\gA_i\lra \gB$. Let now $\gD$ be another $\ast$-algebra containing $\gB$ as a $\ast$-subalgebra and suppose $\theta_i:\gA_i\lra\gD$ are a family of $\gB-\gB$ bimodule maps such that ${\theta_i}_{|\gB}=\text{Id}_{\gB}$.

  We will write $\gA_i^\circ$ for the set $\text{Ker} \psi_i\subset \gA_i$ and denote be  $\underset{\small{i\in I}}{\free}\gA_i$ the free product of $\ast$-algebras $\{\gA_i\}_{i\geq0}$ \emph{with amalgamation} over $\gB$.

    We fist need to briefly review a result from \cite{boca1}.

  \begin{defn}
   The \emph{free product} of the maps
   $\{\theta_i\}_{i\in I}$ is the map 
   \[
   \theta_\free=\underset{i\in I}{\free}\theta_i:\Free \gA_i\lra \gD
   \] 
   given by
   \begin{eqnarray*}
   \theta_\free (a_1\cdots a_n)&=&\theta_{i_1}(a_1)\cdots \theta_{i_n}(a_n)\ \text{whenever}\ a_k\in \gA_{i_k}^\circ, i_j\neq i_{j+1}\\
   {\theta_\free}_{|\gB}&=&\text{Id}_{\gB}.
   \end{eqnarray*}
  \end{defn}

 \begin{thm}\emph{(B. Foca, \cite{boca1}, Theorem 3.2)}\label{boca11}
  If, with the notations above, $\{\gA_i\}_{i\in I}$,  $\gB$, and $\gD$ are unital C$^\ast$-algebras, $\psi_i$ are projections of norm 1 and $\theta_i$ are completely positive unital maps, then $\theta_{\free}$ extends to a unital completely positive map from the universal free product of the C$^\ast$-algebras $\gA_i$ to $\gD$.
 \end{thm}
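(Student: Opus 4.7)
The plan is to follow the standard dilation-theoretic strategy: realize each $\theta_i$ as a vector coefficient of a Stinespring/KSGNS representation into a Hilbert $\gD$-module, assemble these representations into a single $\ast$-representation on the amalgamated free-product Hilbert module, and then recover $\theta_\free$ as a vector coefficient of that combined representation. Because a vector coefficient of a $\ast$-representation of a C$^\ast$-algebra is automatically completely positive and contractive, the theorem follows once the identification formula is verified on alternating centered words.

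\textbf{Step 1 (local dilations).} For each $i\in I$ apply the KSGNS construction to the unital, $\gB$-bimodular, completely positive map $\theta_i:\gA_i\to\gD$, producing a Hilbert $\gD$-module $E_i$, a $\ast$-representation $\pi_i:\gA_i\to\mathcal{L}(E_i)$, and a distinguished vector $\xi_i\in E_i$ satisfying $\langle\xi_i,\pi_i(a)\xi_i\rangle_\gD=\theta_i(a)$ and $\langle\xi_i,\xi_i\rangle_\gD=1_\gD$. The condition $\theta_i|_\gB=\mathrm{Id}_\gB$ forces the $\gB$-centrality $\pi_i(b)\xi_i=\xi_i b$. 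Writing $E_i^\circ=\{\eta\in E_i:\langle\xi_i,\eta\rangle_\gD=0\}$ yields an orthogonal decomposition $E_i=\xi_i\cdot\gD\oplus E_i^\circ$ of $\gB$-$\gD$ bimodules, with the centered elements $\gA_i^\circ$ mapping $\xi_i$ into $E_i^\circ$.

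\textbf{Step 2 (free product bimodule and representation).} Form the amalgamated free product Hilbert $\gD$-module
\[
E=\xi\cdot\gD\oplus\bigoplus_{n\geq 1}\bigoplus_{\substack{i_1,\ldots,i_n\in I\\ i_k\neq i_{k+1}}}E_{i_1}^\circ\otimes_\gB E_{i_2}^\circ\otimes_\gB\cdots\otimes_\gB E_{i_n}^\circ,
\]
where each $E_i^\circ$ is viewed as a $\gB$-$\gB$ bimodule by restricting its right $\gD$-action to $\gB$ and $\otimes_\gB$ is the interior tensor product. For each $i$ let $E(i)\subset E$ be the subspace spanned by the summands whose leading factor has index distinct from $i$, and define an adjointable partial isometry $V_i:E\to E_i\otimes_\gB E(i)$ by the usual free-product formula (analogous to the map of Section~2): $V_i\xi=\xi_i\otimes\xi$; a word beginning in $E_i^\circ$ maps to the corresponding tensor in $E_i\otimes_\gB E(i)$; and $V_i\eta=\xi_i\otimes\eta$ for the remaining $\eta\in E(i)$. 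Setting $\lambda_i(a)=V_i^\ast(\pi_i(a)\otimes\mathrm{Id}_{E(i)})V_i$ produces $\ast$-representations $\lambda_i:\gA_i\to\mathcal{L}(E)$ that agree on $\gB$ (all restricting to the diagonal $\gB$-action on $E$), so the universal property of the amalgamated free product assembles them into a $\ast$-representation $\lambda:\Free\gA_i\to\mathcal{L}(E)$.

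\textbf{Step 3 (recognition and conclusion).} A short induction on the length of an alternating centered word $a_1\cdots a_n$, $a_k\in\gA_{i_k}^\circ$, $i_k\neq i_{k+1}$, shows that
\[
\lambda_{i_1}(a_1)\cdots\lambda_{i_n}(a_n)\xi=\eta+\xi\cdot\theta_{i_1}(a_1)\cdots\theta_{i_n}(a_n)
\]
for some $\eta$ in the orthogonal complement of $\xi\cdot\gD$. Pairing with $\xi$ and combining with $\lambda|_\gB=\mathrm{Id}_\gB$ gives $\theta_\free(x)=\langle\xi,\lambda(x)\xi\rangle_\gD$ for every $x$ in the algebraic amalgamated free product. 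Since $\lambda$ extends to the universal free product C$^\ast$-algebra by universality and $\xi$ satisfies $\langle\xi,\xi\rangle_\gD=1_\gD$, the right-hand side is a unital completely positive map of norm $1$ that extends $\theta_\free$, which is exactly the statement of the theorem. The principal technical hurdle lies in Step 2: verifying that the iterated interior tensor products over $\gB$ remain positive definite and produce genuine Hilbert $\gD$-modules. This requires the left $\gB$-action on each $E_i^\circ$ inherited from $\pi_i$ to be a $\ast$-homomorphism into $\mathcal{L}(E_i^\circ)$ that is compatible with the right $\gD$-action, which in turn depends on $\psi_i$ being a genuine conditional expectation — precisely the norm-$1$ projection hypothesis, via Tomiyama's theorem.
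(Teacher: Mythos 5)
You should first note that the paper itself offers no proof of this statement: it is quoted from Boca \cite{boca1}, and the construction behind Boca's proof is the operator-valued analogue of the two-representation model that Section \ref{monotcalg} builds in the scalar monotone case, where each algebra acts through \emph{two} maps --- a dilation of the target map at the vacuum level, and the GNS representation of the conditional expectation $\psi_i$ on the deeper tensor factors (compare $j_k(a)=\omega_k(\pi_k(a))P_k\oplus\omega_k(\sigma_k(a))P_k^{\bot}$). Your proposal uses only one dilation, the KSGNS dilation of $\theta_i$, and this is a genuine gap, not a stylistic difference: $\psi_i$ never enters your construction of $E$, of $\lambda$, or of the vector coefficient $\langle\xi,\lambda(\cdot)\xi\rangle$, whereas the map to be realized depends essentially on $\psi_i$ (which words are reduced is determined by $\ker\psi_i$). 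Concretely, your Step 1 claim that $a\in\gA_i^\circ=\ker\psi_i$ sends $\xi_i$ into $E_i^\circ$ is false: the $\xi_i\gD$-component of $\pi_i(a)\xi_i$ is $\xi_i\,\theta_i(a)$, and $\theta_i$ has no reason to vanish on $\ker\psi_i$; if it did vanish, your Step 3 formula would make $\theta_{i_1}(a_1)\cdots\theta_{i_n}(a_n)$ the $\xi$-coefficient of a vector you had just declared orthogonal to $\xi\gD$, so Steps 1 and 3 contradict each other. The induction of Step 3 in fact fails at length three: take $\gB=\gD=\mathbb{C}$, so that $\theta_i=\varphi_i$ are states, and take $\psi$-centered $a_1,a_3\in\gA_1$, $a_2\in\gA_2$. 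The theorem requires the conditionally free value $\varphi_1(a_1)\varphi_2(a_2)\varphi_1(a_3)$, but in your model the second visit to $\gA_1$ acts by the \emph{same} representation $\pi_1$ on the depth-one vector created by $a_3$, and a direct computation gives $\langle\xi,\lambda(a_1a_2a_3)\xi\rangle=\varphi_2(a_2)\varphi_1(a_1a_3)$, the plain free product state, which differs from the required value unless $\varphi_1$ happens to factorize on $\ker\psi_1$.

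There is a second, structural defect in Step 2: the spaces $E_i^\circ$ coming from the KSGNS dilation of $\theta_i$ are Hilbert $\gD$-modules, and $E_{i_1}^\circ\otimes_{\gB}E_{i_2}^\circ$ is not a legitimate interior tensor product. Restricting the right action to $\gB$ does not make the inner product $\gB$-valued; in $\langle x\otimes y,x'\otimes y'\rangle=\langle y,\langle x,x'\rangle y'\rangle$ the element $\langle x,x'\rangle$ lies in $\gD$ and cannot act on the next factor, so the semi-inner product you need is simply undefined (accordingly, your appeal to Tomiyama's theorem for positive definiteness addresses a non-issue). The repair is exactly the missing two-representation device: for each $i$ use the KSGNS bimodule of the conditional expectation $\psi_i$, whose inner product is $\gB$-valued, to build the word spaces, keep the Stinespring dilation of $\theta_i$ (with $\gD$ represented faithfully on a Hilbert space) for the component adjacent to the vacuum, and let $a\in\gA_i$ act through the $\theta_i$-dilation there and through the $\psi_i$-GNS representation on all deeper factors. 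With that architecture the $\psi$-centered letters create strictly deeper, vacuum-orthogonal vectors, the moment computation returns $\theta_{i_1}(a_1)\cdots\theta_{i_n}(a_n)$ on reduced words, and your Step 3 mechanism --- vector coefficients of $\ast$-representations are unital completely positive and pass to the universal free product --- then closes the argument as intended.
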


  As in \cite{hasebe}, for each $i\in I$ consider now $\widetilde{\gA_i}=\gB1\oplus\gA_i$ (direct sum of $\gB$-bimodules). If $\gB$ is unital, then $\widetilde{\gA_i}$ is a unitalization of $\gA_i$, but $1_{\widetilde{\gA_i}}\neq 1_{\gA_i}$. let $\widetilde{\gA}=\Free \widetilde{\gA_i}$ be the free product of $\ast$-algebras \emph{with} amalgamation over $\gB$. Nothe that we have the natural decomposition $\widetilde{\gA}=\gB1\oplus \gA$, where $\gA=\overline{\free}\gA_i$, the free product of $\ast$-algebras \emph{without} amalgamation over $\gB$. The algebra $\gA$ is still a $\gB$-ring, and, as a vector space, we have the identification
  \[
  \gA\cong\bigoplus_{n=1}^\infty\bigoplus_{i_1\neq i_2\cdots\neq i_n} \gA_{i_1}\otimes_{\gB}\gA_{i_2}\otimes_{\gB}\cdots \gA_{i_n}.
   \]

   \begin{defn}\label{defn1}
   The monotone product of the maps $\{\theta_i\}_{i\geq0}$ is the map\\ $\theta=\underset{i\in I}{\rhd}\theta_i:\gA\lra\gD$ given by:
 \begin{eqnarray}
 \theta(a_1\cdots a_n)&=&\theta_{i_1}(a_1)\theta(a_2\cdots a_n)\ \text{if}\ i_1>i_2\label{eq1}\\
 \theta(a_1\cdots a_n)&=&\theta(a_1\cdots a_{n-1})\theta_{i_n}(a_n)\ \text{if}\ i_n>i_{n-1}\label{eq2}\\
 \theta(a_1\cdots a_n)&=&\theta(a_1\cdots a_{k-1}\psi_{i_k}(a_k) a_{k+1}\dots a_n)+\label{eq3}\\
 &&\hspace{1cm}\theta(a_1\cdots a_{k-1})[\theta_{i_k}(a_k)-\psi_{i_k}(a_k)]\theta(a_{k+1}\cdots a_n).\nonumber
 \end{eqnarray}

   \end{defn}

\begin{prop}
 The monotone product of maps, defined above, is associative.
\end{prop}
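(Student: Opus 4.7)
The plan is to reduce to a binary associativity statement: for any partition $I = I_1 \sqcup I_2$ with $I_1 < I_2$ elementwise, the composite $\eta_1 \rhd \eta_2$ coincides with $\theta := \underset{i \in I}{\rhd} \theta_i$, where $\eta_s := \underset{i \in I_s}{\rhd} \theta_i$ for $s = 1, 2$. The outer monotone product on the left uses the induced conditional expectations $\widetilde{\psi}_s := \underset{i \in I_s}{\rhd} \psi_i : \gA_{I_s} \to \gB$, themselves defined by the same recursion (\ref{eq1})-(\ref{eq3}). General associativity for any refinement of $I$ into ordered blocks then follows by iterating this binary identity along a chain of refinements.

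I would verify the binary identity by induction on the length $n$ of an alternating word $w = a_1 \cdots a_n$ with $a_k \in \gA_{i_k}$ and $i_j \neq i_{j+1}$. Both maps are $\gB$-bimodule maps equal to $\theta_i$ on each $\gA_i$, which disposes of the base case $n = 1$. For $n \geq 2$, locate in the index sequence one of the three patterns triggering (\ref{eq1})-(\ref{eq3})---a left descent $i_1 > i_2$, a right descent $i_n > i_{n-1}$, or a strict interior peak $i_{l-1} < i_l > i_{l+1}$---and apply that rule both at the outer level to $(\eta_1 \rhd \eta_2)(w)$ and at the direct level to $\theta(w)$, then compare using the inductive hypothesis on the shorter words produced. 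The essential point is that the defining rules are blind to the absolute values of the $i_j$'s and respond only to the order pattern; because $I_1 < I_2$ elementwise, both recursions identify peaks and descents at the same positions, and $\gB$-bilinearity of $\eta_1, \eta_2$ absorbs the inserted factor $\psi_{i_l}(a_l) \in \gB$ across any interface between the two blocks.

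The main obstacle I anticipate is case (\ref{eq3}) when the peak $i_l$ lies strictly inside a single block, say $I_2$, with $i_{l-1}, i_{l+1} \in I_2$ as well. The outer map reads the peak only through $\eta_2$ and $\widetilde{\psi}_2$, so the comparison depends on the parallel assertion that $\widetilde{\psi}_2$ treats this peak correctly, which in turn is the same kind of identity for the conditional expectations. I would therefore run a simultaneous induction covering both $\theta = \eta_1 \rhd \eta_2$ and $\widetilde{\psi} = \widetilde{\psi}_1 \rhd \widetilde{\psi}_2$ in lockstep, so each step of the main argument can invoke the expectation-level statement on strictly shorter words. Once this coupled induction is in place, the remaining verifications for cases (\ref{eq1}) and (\ref{eq2}) are routine applications of $\gB$-linearity.
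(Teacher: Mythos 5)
Your plan is correct and is essentially the argument the paper has in mind: the paper's entire proof is a one-line deferral to the scalar-case induction of Hasebe (Theorem 3.6 of \cite{hasebe}), and your binary-block reduction with a coupled induction on alternating words --- handling interior peaks through the simultaneous expectation-level statement $\widetilde{\psi}_1\rhd\widetilde{\psi}_2=\underset{i\in I}{\rhd}\psi_i$ --- is precisely the ``trivial (though tedious) re-writing'' of that argument in the $\gB$-valued setting that the paper invokes. Beyond the bookkeeping you already acknowledge, there is no gap.
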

\begin{proof}
 The proof is just a trivial (though tedious) re-writing of the argument from the scalar case in \cite{hasebe}, Theorem 3.6.
\end{proof}
\begin{prop}\label{cfree-monot}
 Let $\gA_1, \gA_2, \gD$ be $\ast$-algebras  containing a common $\ast$-subalgebra $\gB$. Suppose that $\psi_2:\gA_2\lra \gB$ is a conditional expectation and that $\theta_k:\gA_k\lra\gD, k=1,2$ are $\gB$-bimodule maps.

 Consider $\widetilde{\gA_k}=\gB1\oplus\gA_k, k=1,2$ and $\widetilde{\psi_k}, \widetilde{\theta_k}:\widetilde{\gA_k}\lra\gB$ given by ($b\in\gB$, and $a_k\in\gA_k$):
 \begin{eqnarray*}
 \widetilde{\psi_1}(b1+a_1)&=&b\\
 \widetilde{\psi_2}(b1+a_2)&=&b+\psi_2(a_2)\\
 \widetilde{\theta_k}(b1+a_k)&=&b+\theta_k(a_k).
 \end{eqnarray*}
 Then we have that
 \[
{\widetilde{\theta_2}\free\widetilde{\theta_1}}_{|\gA_1\overline{\free}\gA_2}=\theta_2\rhd\theta_1.
 \]
 \end{prop}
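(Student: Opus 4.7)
The strategy is to show $F := \widetilde{\theta_2}\free\widetilde{\theta_1}|_{\gA_1 \overline{\free} \gA_2}$ satisfies the same base values and recursions \eqref{eq1}--\eqref{eq3} that uniquely characterize $\theta_2 \rhd \theta_1$; equality then follows by induction on word length. The base case is immediate: for $a \in \gA_1 = \ker \widetilde{\psi_1}$ the free-product formula gives $F(a) = \widetilde{\theta_1}(a) = \theta_1(a)$, while for $a \in \gA_2$ the decomposition $a = (a - \psi_2(a)1) + \psi_2(a)1$ into $\ker \widetilde{\psi_2} \oplus \gB$ yields $F(a) = \theta_2(a) - \psi_2(a) + \psi_2(a) = \theta_2(a)$.

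The key technical ingredient is a factorization identity obtained from Theorem~\ref{boca11}: if $w \in \widetilde{\gA_1}\free\widetilde{\gA_2}$ is a sum of reduced words each beginning (resp.\ ending) with a factor in $\widetilde{\gA_1}^\circ = \gA_1$, and $x^\circ \in \ker \widetilde{\psi_2}$, then $F(x^\circ w) = \widetilde{\theta_2}(x^\circ) F(w)$ (resp.\ $F(w x^\circ) = F(w) \widetilde{\theta_2}(x^\circ)$), since prepending or appending $x^\circ \in \widetilde{\gA_2}^\circ$ produces another reduced word of alternating type on which the multiplicative formula of Theorem~\ref{boca11} applies. With this in hand, each recursion is verified by decomposing the pivotal $\gA_2$-factor as $a = a^\circ + \psi_2(a)1$. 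For \eqref{eq1} with $a_1 \in \gA_2$ and $a_2 \in \gA_1$, the $a_1^\circ$-piece contributes $(\theta_2(a_1) - \psi_2(a_1)) F(a_2 \cdots a_n)$ via the factorization identity applied to $w = a_2 \cdots a_n$ (whose reduced expansion starts in $\gA_1$), while the scalar piece absorbs $\psi_2(a_1)$ into $a_2$ to give $\psi_2(a_1) F(a_2 \cdots a_n)$ by the $\gB$-bimodule property of $F$ inherited from Boca's construction; summing recovers \eqref{eq1}, and \eqref{eq2} is symmetric. For \eqref{eq3} at a peak position $k$ (so $a_k \in \gA_2$ flanked by $\gA_1$-factors), the decomposition $a_k = a_k^\circ + \psi_2(a_k)1$ splits the word into a scalar piece that merges $\psi_2(a_k)$ into $a_{k-1}$ and $a_{k+1}$ yielding a length-$(n-2)$ word matching the first summand of \eqref{eq3} by induction, and an $a_k^\circ$-piece which factors via the two-sided identity as $F(a_1\cdots a_{k-1})[\theta_2(a_k) - \psi_2(a_k)]F(a_{k+1}\cdots a_n)$.

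The main obstacle will be the combinatorial verification of the factorization identity's hypothesis: that the reduced-form expansion in $\widetilde{\gA_1}\free\widetilde{\gA_2}$ of any alternating word $a_m \cdots a_l$ with $a_m, a_l \in \gA_1$ consists of reduced words starting and ending in $\gA_1 = \widetilde{\gA_1}^\circ$. This is handled inductively by expanding each interior $\gA_2$-factor as centered plus scalar, absorbing the emergent $\gB$-valued scalars into neighboring $\gA_1$-positions via the bimodule structure so that the endpoints remain in $\gA_1$, and observing that the resulting summands are genuinely reduced. Once this bookkeeping is settled, the rest of the argument is a direct application of Theorem~\ref{boca11}.
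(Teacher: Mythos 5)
Your proposal is correct, but it argues in the direction opposite to the paper's. The paper takes $\theta=\theta_2\rhd\theta_1$ as the given object and shows, by induction on word length, the single identity $\theta(a_1\cdots a_n)=\theta_{i_1}(a_1)\cdots\theta_{i_n}(a_n)$ for every alternating word whose $\gA_2$-letters lie in $\ker\psi_2$ (the $\gA_1$-letters are unconstrained, since $\gA_1=\ker\widetilde{\psi_1}$): if $a_1\in\gA_2$ apply (\ref{eq1}), if $a_n\in\gA_2$ apply (\ref{eq2}), and otherwise some interior $a_k\in\gA_2$ sits at a peak, where (\ref{eq3}) collapses to $\theta(a_1\cdots a_{k-1})\theta_2(a_k)\theta(a_{k+1}\cdots a_n)$ because $\psi_2(a_k)=0$; since these semi-centered reduced words span $\gA_1\overline{\free}\gA_2$ and the free product map is, by its very definition, multiplicative on exactly such words, equality follows. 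You instead verify that the restricted free product map satisfies the recursions (\ref{eq1})--(\ref{eq3}) on arbitrary alternating words and conclude by uniqueness of the recursively defined map. The two routes are dual and rest on the same combinatorial substrate (centering $a=a^\circ+\psi_2(a)1$ and absorbing the emergent $\gB$-scalars into neighboring $\gA_1$-letters); the paper's direction is the cheaper one, because applying the recursions to already-centered letters makes (\ref{eq3}) degenerate instantly, whereas your direction must additionally establish the two-sided factorization identity, the reduced-expansion bookkeeping (which you correctly identify as the main obstacle and settle, and which the paper leaves implicit in its ``it suffices to show'' reduction), and the easy observation that linearity, the base case, and (\ref{eq1})--(\ref{eq3}) determine the map by induction on length. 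One attribution slip: the multiplicative identity $F(x^\circ w)=\widetilde{\theta_2}(x^\circ)F(w)$ on reduced words is not ``the multiplicative formula of Theorem \ref{boca11}'' --- that theorem is Boca's complete-positivity statement and plays no role in this proposition; the identity is immediate from the Definition of the free product of maps that precedes it. With that citation corrected, your argument is sound.
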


 \begin{proof}
  For simplicity, denote  $\theta=\theta_2\rhd\theta_1$. We just need to show that
  \[\theta(a_1\cdots a_n)=\theta_{i_1}(a_1)\cdots\theta_{i_n}(a_n)\]
  whever $a_j\in\text{Ker}(\psi_{i_j})\cap\gA_{i_j}, i_j\in\{1,2\}$ with $i_k\neq i_{k+1}$.

  We will prove the assertion by induction on $n$. The case $n=1$ is trivial.
 For the induction step, note that if $a_1$ or $a_n$ are from $\gA_2$, then the conclusion follows from Definition \ref{defn1}, relations (\ref{eq1}), (\ref{eq2}).

 If $a_1, a_n\in\gA_1$, then there exists $k\in\{2, \dots, n-1\}$ such that $a_k\in\gA_2$. Then (\ref{eq3}) implies
 \begin{eqnarray*}
 \theta(a_1\cdots a_n)&=&\theta(a_1\cdots a_{k-1}\psi_{2}(a_k) a_{k+1}\dots a_n)+\\
 &&\hspace{1cm}\theta(a_1\cdots a_{k-1})[\theta_{2}(a_2)-\psi_{2}(a_k)]\theta(a_{k+1}\cdots a_n)\\
 &=&\theta(a_1\cdots a_{k-1})\theta_{2}(a_2)\theta(a_{k+1}\cdots a_n),
 \end{eqnarray*}
 since $\psi_2(a_k)=0$, and the conclusion follows from the induction hypothesis.
 \end{proof}

 \begin{thm}\label{thmboca}
  Suppose now that $\{\gA_i\}_{i\in I}, \gD$ are unital C$^\ast$-algebras, $\gB$ is a common C$^\ast$-subalgebra of theirs containing the unit. Suppose that $\psi_i:\gA_i\lra\gB$ are positive conditional expectations and $\theta_i:\gA_i\lra\gD$ are unital, completely positive $\gB$-bimodule maps. Then the map $\underset{i\geq0}{\rhd}\theta_i$ is also a  completely positive $\gB$-bimodule map.
 \end{thm}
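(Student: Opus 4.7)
The plan is to deduce the theorem by combining Proposition \ref{cfree-monot}, which expresses the monotone product of maps as the restriction of a free product (with amalgamation over $\gB$) of the associated unitized maps, with Theorem \ref{boca11} of Boca, which handles complete positivity in the free-product case. First, using the associativity of $\underset{i}{\rhd}$ established in the preceding proposition, I reduce the general case to $|I|=2$ by induction on $|I|$.

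For the two-algebra case, Proposition \ref{cfree-monot} identifies $\theta_2 \rhd \theta_1$ with ${(\widetilde{\theta_2} \free \widetilde{\theta_1})}_{|\gA_1 \overline{\free} \gA_2}$, so it suffices to verify that the unitized data $(\widetilde{\gA_k}, \widetilde{\psi_k}, \widetilde{\theta_k})$ satisfies the hypotheses of Boca's theorem. The structural observation is that in $\widetilde{\gA_k}$ the element $e = 1 - 1_{\gA_k}$ is a central self-adjoint idempotent (the commutation with $\gA_k$ is immediate; the commutation with $\gB \cdot 1$ follows from $\gB \subset \gA_k$), yielding an algebra isomorphism $\widetilde{\gA_k} \cong \gA_k \oplus \gB$ with the subalgebra $\gB$ embedded diagonally via $b \mapsto b \cdot 1 = (b, b)$. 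Under this identification, $\widetilde{\psi_k}$ becomes $(x,b) \mapsto \psi_k(x)$ and $\widetilde{\theta_k}$ becomes $(x,b) \mapsto \theta_k(x)$ (using $\psi_k|_\gB = \theta_k|_\gB = \text{Id}_\gB$). From these explicit forms, all the required properties---the $\gB$-bimodule property over the diagonal $\gB$, unitality ($\widetilde{\theta_k}(1_{\gA_k},1_\gB) = \theta_k(1_{\gA_k}) = 1_\gD$), and complete positivity---follow directly from those of $\psi_k, \theta_k$ together with the standard behavior of maps out of direct sums of C$^\ast$-algebras; Tomiyama's theorem then guarantees that $\widetilde{\psi_k}$ is a projection of norm one.

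Given these verifications, Theorem \ref{boca11} produces a unital completely positive extension of $\widetilde{\theta_2} \free \widetilde{\theta_1}$ to the universal amalgamated free product C$^\ast$-algebra; since restriction to a $\ast$-subalgebra preserves complete positivity, Proposition \ref{cfree-monot} identifies this restriction with $\theta_2 \rhd \theta_1$ and the conclusion follows. The main obstacle I anticipate is the C$^\ast$-algebraic bookkeeping on the unitizations: one must carefully distinguish between the two distinct copies of $\gB$ living inside each $\widetilde{\gA_k}$ (the original $\gB \subset \gA_k \subset \widetilde{\gA_k}$ versus the diagonal $\gB \cdot 1$) and take the amalgamation with respect to the diagonal copy, over which $\widetilde{\psi_k}$ and $\widetilde{\theta_k}$ are bimodule maps. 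Once this identification is pinned down, the proof is essentially an invocation of the two prior results.
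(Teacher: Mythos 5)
Your base case ($|I|=2$) is essentially the paper's own argument: the paper likewise observes that $e_k=1_{\widetilde{\gA_k}}-1_{\gA_k}$ is a projection giving the C$^\ast$-direct sum $\widetilde{\gA_k}=\gA_k\oplus\gB e_k$, checks that the $\widetilde{\theta_k}$ are unital completely positive $\gB$-bimodule maps, and concludes from Theorem \ref{boca11} combined with Proposition \ref{cfree-monot}. One slip in your verification: under the identification $b1+a_k=(a_k+b1_{\gA_k})\oplus be_k$, the map $\widetilde{\psi_2}$ is indeed $(x,b)\mapsto\psi_2(x)$, but $\widetilde{\psi_1}$ is $(x,b)\mapsto b$, \emph{not} $(x,b)\mapsto\psi_1(x)$; the kernel of $\widetilde{\psi_1}$ must be all of $\gA_1$. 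This asymmetry between the two unitizations is precisely what makes the restricted free product equal the \emph{monotone} product in Proposition \ref{cfree-monot}: if you feed the norm-one projection $(x,b)\mapsto\psi_1(x)$ into Boca's construction, the resulting map $\widetilde{\theta_2}\free\widetilde{\theta_1}$ is computed with respect to the wrong decomposition of $\widetilde{\gA_1}$ and does not restrict to $\theta_2\rhd\theta_1$. Since both candidate maps are conditional expectations, your complete-positivity checks are unaffected, but the decomposition defining $\theta_\free$ must be the one from the proposition.

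The genuine gap is in your induction step. Writing $i_1<\cdots<i_{n+1}$ and grouping by associativity, $\underset{j}{\rhd}\,\theta_{i_j}=\widetilde{\theta}\rhd\theta_{i_{n+1}}$ with $\widetilde{\theta}=\underset{1\leq j\leq n}{\rhd}\theta_{i_j}$; but the induction hypothesis only gives $\widetilde{\theta}$ as a completely positive map on the algebraic free product $\ast$-algebra $\gA(n)=\overline{\free}_{1\leq j\leq n}\gA_{i_j}$, which carries no C$^\ast$-norm a priori. Your two-algebra case---and Theorem \ref{boca11}, on which it rests---requires both factors to be \emph{unital C$^\ast$-algebras}, so ``reduce to $|I|=2$ by induction'' does not yet close the argument. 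This is exactly where the paper does additional work: it observes that the unitalization $\widetilde{\gA(n)}$ satisfies the Combes axiom (for each $x$ there is $\lambda(x)>0$ with $x^\ast x\leq\lambda(x)$, because the algebra is generated by the C$^\ast$-algebras $\gA_{i_j}$), invokes the Stinespring dilation theorem in its version for unital $\ast$-algebras satisfying this axiom, and thereby extends $\widetilde{\theta}$ to a completely positive map on the completion $\widehat{\gA(n)}$ in the greatest C$^\ast$-norm; only then can the $N=2$ argument be applied to the pair $\bigl(\widehat{\gA(n)},\gA_{i_{n+1}}\bigr)$. Without this extension-to-completion step the inductive appeal to Boca's theorem fails. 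A minor additional point: induction on $|I|$ only covers finite index sets, whereas the paper inducts on the number $N(A)$ of algebras occurring in a given positive matrix $A$, which handles arbitrary totally ordered $I$ since any single matrix involves only finitely many indices.
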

 \begin{proof}
 The proof relies heavily on Theorem \ref{boca11} and the associativity of the monotone product of maps.
 To simplify the writting, denote  $\theta=\underset{i\in I}{\rhd}\theta_i$ the monotone product map and $\gA=\overline{\Free}\gA_i$ the free product $\ast$-algebra.

  We need to show that, for any positive integer $n$, if $A=[a_{i,j}]_{i,j=1}^n$ is a positive element from $M_n(\gA)$ then the matrix $\theta(A)=[\theta(a_{i,j})]_{i,j=1}^n$
is also positive in $M_n(\gD)$. Each entry $a_{i,j}$ of $A$ is a finite sum
 \[ a_{i,j}=\sum_{l=1}^{N(i,j)}\alpha_l(i,j)\]
 where each $\alpha_l(i,j)$ is a reduced product from $\gA$, i. e. is written as a product of the form $a_1a_2\cdots a_m$ with $a_k\in\gA_{i_k}$, $i_s\neq i_{s+1}$.

  Let $N(A)=\text{card}\{i\in I: \text{there is a word in one of the entries of $A$ that contains}$ $\text{elements from $\gA_i$}\}$.

  We will prove the assertion by induction on $N(A)$. For $N(A)=1$, the conclusion is equivalent to the completely positivity of $\theta_1$.

  If $N(A)=2$, for $k=1,2$, let $\widetilde{\gA_k}=\gB1\oplus\gA_k$ and, as in Proposition \ref{cfree-monot}, consider the maps $\widetilde{\psi_k}, \widetilde{\theta_k}:\widetilde{\gA_k}\lra\gB$\ given by ($b\in\gB, a_k\in\gA_k$):
   \begin{eqnarray*}
 \widetilde{\psi_1}(b1+a_1)&=&b\\
 \widetilde{\psi_2}(b1+a_2)&=&b+\psi_2(a_2)\\
 \widetilde{\theta_k}(b1+a_k)&=&b+\theta_k(a_k).
 \end{eqnarray*}

  Remark that $\widetilde{\theta_k}$ are unital completely positive $\gB$-bimodule maps from the $\ast$-algebras $\widetilde{\gA_k}$ to $\gD$. First note that $1_{\gA_k}$ are projections in $\widetilde{\gA_k}$, respectively, and so are $e_k=1_{\widetilde{\gA_k}}-1_{\gA_k}$. Moreover, $\widetilde{\gA_k}=\gA_k\oplus \gB e_k$ (direct sum of C$^\ast$-algebras). If $a_k\in\gA_k, b\in\gB$, then $b1+a_k=be_k + (a_k+b1_{\gA_k})$ and $a_k+b1_{\gA_k}=\alpha_k\in\gA_k$. It follows that
 \begin{eqnarray*}
  \widetilde{\theta_k}(\alpha_k+be_k)
  &=&\widetilde{\theta_k}(b1+a_k)\\
   &=&b+\theta_k(a_k)\\
               &=&\theta_k(\alpha_k).
                \end{eqnarray*}

   Theorem \ref{boca11} implies now that $\widetilde{\theta_2}\free\widetilde{\theta_1}$ is a completely positive map from $\widetilde{\gA_2}\free\widetilde{\gA_1}$ to $\gD$, particularly from $\gA_2\overline{\free}\gA_1$ to $\gD$, and the assertion follows now from Proposition
   \ref{cfree-monot}.
   
    The induction step follows from the above argument and the associativity of the monotone product of maps. To see that, we will again need an argument from \cite{boca1}.
    
     A $\ast$-algebra $A$ is said to satisfy the \emph{Combes axiom} if for each $x\in A$ there is an $\lambda(x)>0$ such that $x^\ast x\leq \lambda(x)$. As mentioned in \cite{boca1}, \cite{boca2}, the Stinespring Dilation Theorem can be easily reformulated as follows:
     
     \emph{Let $\cA$ be a unital $\ast$-algebra satidfying the Combes axiom and let $\Phi:\cA\lra \cL(\mathcal{H})$ be a unital completely positive linear map. Then there exist a Hilbert space $K$, a $\ast$-representation $\pi:\cA\lra\cL(\mathcal{K})$ and an isometry $V\in\cL(\mathcal{H},\mathcal{K})$ such that }
     \begin{enumerate}
     \item[(i)] $\Phi(x)=V^\ast\pi(x)V$ \emph{for all}\ $x\in \cA$;
     \item[(ii)]$\mathcal{K}$ \emph{is the closed linear span of } $\pi(\cA)V\mathcal{H}$.
     \end{enumerate}   
        
  Suppose now the assertion true for $N(A)\leq n$  and let $A^\prime$ be a matrix from some $M_m(\gA)$ such that $N(A^\prime)=n+1$, that is the words summing in the entries of $A^\prime$ are containg only elements from the subalgebras $\gA_{i_1}, \gA_{i_2}, \dots, \gA_{i_{n+1}}$, with $i_1<\dots i_{n+1}$. Let $\gA(n)=\overline{\free}_{1\leq j \leq n}\gA_{i_j}$. From the induction hypothesis, we have that $\widetilde{\theta}=\underset{1\leq j \leq n}{\rhd}\theta_{i_j}$ is a completely positive map from $\gA^\prime$ to $\gD$. Take now $\widetilde{\gA(n)}$ the unitalization of $\gA^\prime$ and extend $\widetilde{\theta}$ to a completely positive map on $\widetilde{\gA^\prime}$ as above.
  
  Since $\widetilde{\gA(n)}$ is spanned by 1 and the unitaries of the C$^\ast$-algebras $\{\gA_{i_j}\}_{j=1}^n$ (in $\widetilde{\gA(n)}$ they are only partial isometries), we have that it satisfies the Combes axiom. The existence of the Stinespring dilation yields the extension of $\widetilde{\theta}$ to the greatest  C$^\ast$-algebra norm
  \[||a||=\sup\{||\pi(a)||: \pi \ast-\text{representation of}\ \widetilde{\gA(n)}\}\]
   completion of $\widetilde{\gA(n)}$. Let $\widehat{\gA(n)}$ be this C$^\ast$-algebra.    
      
   Therefore the entries of $A^\prime$ are words only in elements from $\gA_{i_{n+1}}$ and $\widehat{\gA(n)}$, which are unital C$^\ast$-algebras endowed with the compleltey positive maps $\theta_{i_{n+1}}$ and $\widetilde{\theta}$. The conclusion follows now from the argument in the case $N(A)=2$ and the associativity of the monotone products.

 \end{proof}
 
  Remark that $\overline{{\Free}}\gA_i$ satisfies the Combes axiom, since it is generated by the C$^\ast$-algebras $\{\gA_i\}_{i\in I}$. The argument from above gives then the following
 \begin{cor}
  With the notations from Theorem \ref{thmboca}, the map $\theta=\underset{i\geq0}{\rhd}\theta_i$ extends to a completely positive map on the universal free product (without amalgamation over $\gB$) C$^\ast$-algebra $\widehat{{\Free}}\gA_i$.
 \end{cor}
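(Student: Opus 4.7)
My plan is to obtain the extension by lifting $\theta$ through a Stinespring dilation and then using the universal property of $\widehat{\Free}\gA_i$. The setup is already provided in the paper: by Theorem \ref{thmboca}, $\theta:\gA\lra\gD$ is a unital completely positive $\gB$-bimodule map on $\gA=\overline{\Free}\gA_i$, and the remark preceding the corollary notes that $\gA$ satisfies the Combes axiom because it is generated by the C$^\ast$-algebras $\{\gA_i\}_{i\in I}$ (each $x\in\gA$ is a finite linear combination of reduced words of elements with controlled norm).

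First, I would embed $\gD$ faithfully into some $\cL(\cH)$ so that $\theta$ becomes a unital completely positive map $\gA\lra \cL(\cH)$. Applying the version of the Stinespring dilation theorem quoted in the proof of Theorem \ref{thmboca}, I obtain a Hilbert space $\cK$, a unital $\ast$-representation $\pi:\gA\lra \cL(\cK)$ and an isometry $V\in\cL(\cH,\cK)$ such that
\[
\theta(x)=V^\ast\pi(x)V\qquad \text{for all } x\in\gA.
\]

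Next I would show $\pi$ extends to $\widehat{\Free}\gA_i$. For each $i\in I$, the restriction $\pi_i=\pi|_{\gA_i}$ is a $\ast$-homomorphism from the unital C$^\ast$-algebra $\gA_i$ to $\cL(\cK)$, hence automatically contractive: $\|\pi_i(a)\|\leq \|a\|_{\gA_i}$ for every $a\in\gA_i$. The universal property of $\widehat{\Free}\gA_i$ (the free product without amalgamation, whose norm is the supremum over all $\ast$-representations compatible with each $\gA_i$) then provides a unique $\ast$-representation $\widehat{\pi}:\widehat{\Free}\gA_i\lra\cL(\cK)$ extending $\pi$. Setting
\[
\widehat{\theta}(x)=V^\ast \widehat{\pi}(x)V,\qquad x\in\widehat{\Free}\gA_i,
\]
yields a unital completely positive map that agrees with $\theta$ on the dense $\ast$-subalgebra $\gA$, and its range lies in $\gD$ on $\gA$, so by continuity it takes values in the norm closure of $\theta(\gA)$ inside $\gD$.

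The only genuinely non-routine point is verifying that the universal C$^\ast$-norm on $\Free\gA_i$ is well-defined (i.e.\ finite), so that $\widehat{\Free}\gA_i$ makes sense — but this is immediate from the existence of at least one faithful $\ast$-representation built from a free product of the GNS representations of each $\gA_i$, and is the same argument used in \cite{boca1}. All remaining steps are standard, so I expect no real obstacle beyond assembling these ingredients.
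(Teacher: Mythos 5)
Your overall route is exactly the paper's own: the corollary is proved there by the remark immediately preceding it ($\overline{\Free}\gA_i$ satisfies the Combes axiom because it is generated by the C$^\ast$-algebras $\gA_i$) together with ``the argument from above'' in the proof of Theorem \ref{thmboca}, namely the reformulated Stinespring theorem giving $\theta(x)=V^\ast\pi(x)V$, the bound $\|\pi(x)\|\leq\|x\|$ for the greatest C$^\ast$-norm $\|x\|=\sup\{\|\pi(x)\|:\pi\ \ast\text{-representation}\}$, continuous extension of $\pi$ to the completion $\widehat{\Free}\gA_i$, and compression by $V$. So your plan reproduces the intended proof. However, one justification in your last paragraph is backwards and happens to concern the very point you single out as non-routine: the existence of a faithful $\ast$-representation of the algebraic free product shows only that the sup-\emph{seminorm} separates points (it is a lower bound), and cannot possibly bound the supremum over \emph{all} $\ast$-representations. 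Finiteness of that supremum is precisely what the Combes axiom delivers: $x^\ast x\leq\lambda(x)1$ forces $\|\pi(x)\|^2\leq\lambda(x)$ uniformly in $\pi$; equivalently, your earlier (correct) observation that each restriction $\pi|_{\gA_i}$ is automatically contractive bounds $\|\pi(a_1\cdots a_n)\|\leq\prod_k\|a_k\|$ on reduced words. The faithful representation is needed for a different purpose, namely that the completion contains $\gA$ injectively; you should separate these two roles.

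A second, smaller repair: $\gA=\overline{\Free}\gA_i$ is not unital (in $\widetilde{\gA}=\gB1\oplus\gA$ the unit lies outside $\gA$, since the units $1_{\gA_i}$ are not amalgamated), so ``unital completely positive map $\gA\lra\cL(\cH)$'' does not typecheck, and the quoted Stinespring statement --- which requires a \emph{unital} $\ast$-algebra satisfying the Combes axiom and a unital map, the inequality $x^\ast x\leq\lambda(x)1$ itself needing the unit --- does not apply verbatim. You must first pass to the unitization $\widetilde{\gA}=\gB1\oplus\gA$ and to the extension $\widetilde{\theta}(b1+a)=b+\theta(a)$, which is completely positive by the same device used in the $N(A)=2$ step of the proof of Theorem \ref{thmboca}, then dilate, extend to the max-norm completion, and compress. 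The paper does this implicitly (its $\widehat{\gA(n)}$ is the greatest-C$^\ast$-norm completion of the unitalization $\widetilde{\gA(n)}$). With the unitization step inserted and the finiteness justification corrected as above, your argument coincides with the paper's.
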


 %%%%%%%%%%%%%%%%%%%%%%%%%%%%%%%%%%%%%%%%%%%%%%%%%%%%%%%%%%%%%%%%%%%%%%%%%%%%%%%
 
 \section{Embeddings of monotone products of C$^\ast$-algebras and completely positive maps}
 
 This section is in all regards very similar to the Sections 1 and 2 of [6]. Most of
the techniques are the same and the results are almost a verbatim translation from
the free case to the monotone case. This was to be expected, since the monotone product of Hilbert bimodules is a subspace of the free product and the partial
isometries in the definition of the monotone product of C$^\ast$-algebras are restrictions
of the unitaries from the definition of the free product. The main diference is that
we will use the construction from Section 2, while [6] is using the construction of
the conditionally monotone product from [3].
 
 \subsection{Monotone products of C$^\ast$-algebras}${}$\\

  We will use the following version of N. Muraki's construction of the monotone product of C$^\ast$-algebras.
 
 Let $\{(\gA_i, \psi_i)\}_{i\in I}$ be a family of unital C$^\ast$-algebras containing a common C$^\ast$-algebra $\gB$ with $1_{\gA_i}\in\gB$ and each $\gA_i$ endowed with a positive conditional expectation $\psi_i:\gA_i\lra\gB$ and having faithful GNS representations.

 We let $E_i=L^2(\gA_i, \psi_i), \psi_i=\widehat{1_{\gA_i}}\in E_i, E_i=\xi_i\gB\oplus E_i^\circ$. Similarly to the previous section, consider the Hilbert $\gB$-bimodules
 \[
 E=\xi\gB\oplus\bigoplus_{n=1}^\infty
  \left(
  \bigoplus_{i_1>\dots>i_n} E_{i_1}^\circ\otimes_\gB\cdots \otimes_\gB E_{i_n}^\circ
  \right)
 \]

and

\[
  E(k)=\xi\gB\oplus\bigoplus_{n=1}^\infty
  \Big(
  \bigoplus_{\substack{i_1>\dots>i_n\\ i_1\leq k}} E_{i_1}^\circ\otimes_{\gB}\cdots \otimes_{\gB}E_{i_n}^\circ
  \Big).
  \]

   Remark that we can define $\widetilde{V}_k:E\lra E_{k}\otimes_\gB E(k-1)$ similarly to the operators $V_k$ from the previous sections; they are adjointable partial isometries (considering the norm induced by the C$^\ast$-norm of $\gB$). For $a\in\cA_k$, define \[
   j_k(a)=\widetilde{V}_k^\ast(a\otimes Id)\widetilde{V}_k\in\mathcal{L}(E).
   \]
   Finally let $\gA$ be the C$^\ast$-algebra generated by $\{j_i(\gA_i)\}_{i\in I}$ in $\mathcal{L}(E)$ and let $\psi:\mathcal{L}(E)\lra \cB$ be the functional given by $\psi(T)=\langle T\xi, \xi\rangle$.

   We will call the pair $(\gA, \psi)=\underset {\scriptscriptstyle{i\in I}}{\rhd}(\gA_i, \psi_i)$ the monotone product of the family of C$^\ast$-algebras $\{(\gA_i, \psi_i)\}_{i\in I}$.

   The following property was shown in \cite{muraki} for the case $\gB=\mathbb{C}$ and in \cite{mvPJM} for the general setting:
   \begin{prop}\label{prop41}
   The functional $\psi$ from above is a conditional expectation with respect to which the subalgebras $\{j_i(\gA_i)\}_{i \in I}$ are monotone independent, i. e. for any $a_k\in j_{i_k}(\gA_{i_k}), 1\leq i\leq n$ such that $i_s\neq i_{s+1}$, we have:
   \begin{enumerate}
   \item[(a)]$\psi(a_1\cdots a_n)=\psi_{i_1}(a_1)\psi(a_2\cdots a_n)\  \text{if}\ i_1> i_2\ $
   \item[(b)]$\psi(a_1\cdots a_n)=\psi(a_1\cdots a_{n-1})\psi_{i_n}(a_n)\  \text{if}\ i_n> i_{n-1}\ $
    \item[(c)]$\psi(a_1\cdots a_n)=\psi(a_1\cdots a_{k-1}\psi_{i_k}(a_k)a_{k+1}\cdots a_n) \ \text{if}\ i_{k-1}< i_k>i_{k+1}.$
 \end{enumerate}
   \end{prop}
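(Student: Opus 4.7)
The proof mirrors the three-case analysis of the Theorem in Section \ref{monotcalg}, simplified because here we have a single $\gB$-valued conditional expectation $\psi_i$ on each $\gA_i$ rather than a pair of scalar states. First I would establish a $\gB$-valued analog of Lemma \ref{lemma1}: for any word $a_k \in \gA_{i_k}$ with $i_s \neq i_{s+1}$, setting $A_k = j_{i_k}(a_k)$, one has
\[
A_1 A_2 \cdots A_n \xi \;=\; \eta + \xi \cdot \psi(A_1 \cdots A_n), \qquad \eta \in E(i_1) \ominus \xi\gB.
\]
This is proved by induction on $n$: the base case comes from decomposing $A_1 \xi = \widetilde{V}_{i_1}^\ast(a_1 \xi_{i_1} \otimes \xi)$ via $a_1 \xi_{i_1} = \xi_{i_1} \psi_{i_1}(a_1) + \zeta_a$ with $\zeta_a \in E_{i_1}^\circ$, and the induction step tracks the action of $\widetilde{V}_{i_1}$ on the two summand-types appearing in $A_2 \cdots A_n \xi$, using the trichotomy in the definition of $\widetilde{V}_k$ (annihilate, act as the identity, or prepend $\xi_k\otimes$, depending on the top index).

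Checking that $\psi$ is a $\gB$-valued conditional expectation onto $\gB$ is routine: the vector $\xi$ satisfies $b\xi = \xi b$ for $b \in \gB$, which gives the bimodule property $\psi(b_1 T b_2) = b_1 \psi(T) b_2$, and positivity is immediate from $\psi(T^\ast T) = \langle T\xi, T\xi\rangle \in \gB_+$.

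With the lemma in hand, the three independence relations follow by re-running the arguments of the Theorem in Section \ref{monotcalg}. For (a), write $A_2 \cdots A_n \xi = \eta + \xi\alpha$ with $\alpha = \psi(A_2 \cdots A_n)$ and $\eta \in E(i_2) \ominus \xi\gB$; since $i_1 > i_2$, one has $\widetilde{V}_{i_1}\eta = \xi_{i_1} \otimes \eta$, so $A_1\eta$ has no $\xi\gB$-component, and taking the inner product with $\xi$ yields $\psi(A_1 \cdots A_n) = \psi_{i_1}(a_1)\alpha$. For (b), use that $A_n\xi = \xi\psi_{i_n}(a_n) + \zeta$ with $\zeta \in E_{i_n}^\circ$; since $i_{n-1} < i_n$, the partial isometry $\widetilde{V}_{i_{n-1}}$ annihilates $\zeta$, so $A_1 \cdots A_n\xi = (A_1 \cdots A_{n-1}\xi)\psi_{i_n}(a_n)$. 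For (c), writing $A_{k+1}\cdots A_n\xi = \eta + \xi\alpha$ with $\eta \in E(i_{k+1}) \ominus \xi\gB$ and using $i_k > i_{k+1}$, one computes
\[
A_k(\eta + \xi\alpha) \;=\; \psi_{i_k}(a_k)(\eta + \xi\alpha) + (\text{terms with leading factor in } E_{i_k}^\circ);
\]
since $i_{k-1} < i_k$, the operator $A_{k-1}$ annihilates the second summand, leaving $A_{k-1}\psi_{i_k}(a_k)A_{k+1} \cdots A_n\xi$. Now $A_{k-1}\psi_{i_k}(a_k) = j_{i_{k-1}}(a_{k-1}\psi_{i_k}(a_k))$ on this subspace by multiplicativity and the $\gB$-bimodule property of $j_{i_{k-1}}$, so taking the inner product with $\xi$ and invoking the induction hypothesis on length gives (c).

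The main technical point to handle carefully is the identification $A_{k-1}b = j_{i_{k-1}}(a_{k-1}b)$ for $b \in \gB$ as operators on the relevant subspace, which rests on the compatibility between the $\gB$-bimodule structure on $E$ and the partial isometries $\widetilde{V}_k$; everything else is an essentially verbatim bimodule-valued translation of the Section \ref{monotcalg} calculations.
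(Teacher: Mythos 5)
Your proposal is correct, but be aware that the paper itself offers no in-text proof of Proposition \ref{prop41}: it is quoted as established in \cite{muraki} for $\gB=\mathbb{C}$ and in \cite{mvPJM} for the general bimodule setting. So your argument is a self-contained substitute rather than a variant of a proof given here, and the route you chose -- a $\gB$-valued analogue of Lemma \ref{lemma1} followed by the three-case computation -- is the natural transplant of the Section \ref{monotcalg} argument, simplified (as you say) because $j_k(a)=\widetilde{V}_k^\ast(a\otimes\mathrm{Id})\widetilde{V}_k$ involves no splitting into $\pi/\sigma$ parts and no projection $P_k$. The details check out. In (a), the claim that $A_1\eta$ has no $\xi\gB$-component uses that $E\ominus\xi\gB$ is invariant under the left $\gB$-action, i.e.\ that each $E_i^\circ$ is a sub-bimodule; this holds precisely because $\psi_i$ is a conditional expectation, so $\langle \xi_i, b f\rangle=b\langle\xi_i,f\rangle=0$ for $f\in E_i^\circ$ -- worth stating, since it silently underlies your base-case decomposition $a\xi_i=\xi_i\psi_i(a)+\zeta$ as well. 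In (b), the annihilation $\widetilde{V}_{i_{n-1}}\zeta=0$ for $\zeta\in E_{i_n}^\circ$ with $i_n>i_{n-1}$ is exactly the mechanism the paper uses when proving Remark \ref{remark42}. The point you rightly flag in (c) -- that $j_{i_{k-1}}(a_{k-1})$ composed with the left action of $b=\psi_{i_k}(a_k)$ equals $j_{i_{k-1}}(a_{k-1}b)$ -- holds globally on $E$, not just on the relevant subspace: left multiplication by $b$ preserves top indices (and maps $\xi c$ to $\xi bc$), so $\widetilde{V}_{i_{k-1}}$ intertwines the left action of $b$ on $E$ with $b\otimes\mathrm{Id}$ on $E_{i_{k-1}}\otimes_{\gB}E(i_{k-1}-1)$, kernels included; this is the one place where the non-unitality of $j_i$ (note $j_i(1_{\gA_i})$ is the projection onto $E(i)$) could cause trouble if mishandled, and your computation sidesteps it correctly. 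Two cosmetic remarks: in (c) no induction on length is actually needed once you have reduced $A_1\cdots A_n\xi$ to $j_{i_1}(a_1)\cdots j_{i_{k-1}}(a_{k-1}\psi_{i_k}(a_k))A_{k+1}\cdots A_n\xi$, since (c) is then immediate from the definition of $\psi$; and your case-(c) computation is the vacuum-vector specialization of the stronger operator identity $a_1a_2a_3=a_1\psi(a_2)a_3$ for $k_1<k_2>k_3$ that the paper proves in Remark \ref{remark42}, so one could alternatively derive (c) (and the peak case of the Lemma-type decomposition) directly from that remark.
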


  % \begin{proof}
   %\end{proof}
   \begin{remark}\label{remark42}
    Actually the subalgebras $\{j_i(\gA_i)\}_{i\in I}$ are satisfying a stronger condition than (b) and (c) from the above Proposition. If $k<l$ and $a\in j_k(\gA_k), b\in j_l(\gA_l)$, then
     \[ab_{|E\ominus E_l^\circ\otimes E(l-1)}=a\psi(b)_{|E\ominus E_l^\circ\otimes E(l-1)}\]
     Particularly,  $a_1a_2a_3=a_1\psi(a_2)a_3$ whenever $a_i\in j_{k_i}(\gA_{k_i})$ with $k_1<k_2>k_3$.
   \end{remark}
   \begin{proof}
  It suffices to show that $ab\eta=a\psi(b)\eta$ for all  $\eta=f_1\otimes_{\gB}\cdots\otimes_{gB} f_n$, with $f_j\in E_{k_j}^\circ$ such that $l\neq k_1>\cdots k_n$.

   If $k_1>l$, then both sides are zero. If $k_1<l$, then
   \[ab\eta=a\widetilde{V_l}^\ast((b\otimes \text{Id})\xi_l\otimes\eta)=a\widetilde{V_l}^\ast(\psi(b)\xi_l+P_{\xi_l}^\bot b\xi_l)\otimes\eta=a\psi(b)\eta.\]
   The last part follows from the fact that $j_i(\gA_i)(E)\subseteq E(i)$.
   \end{proof}
   
\begin{remark}
${}$
\end{remark}
  The above construction can easily be modified to obtain a representation of the free product $\ast$-algebra of the family $\{\gA_i\}_{i\in I}$ that satisfy the relations (a)-(c) from Proposition \ref{prop41} without the more restrictive condition from Remark \ref{remark42}.  With the above notations, let $(\mathcal{E}, \xi)$ be the free product bimodule of the family $\{E_i, \xi_i\}_{i\in I}$, that is
  \[ \mathcal{E}=\gB\xi\oplus\bigoplus_{n=1}^\infty
  \LARGE(
  \bigoplus_{i_1\neq\dots\neq i_n} E_{i_1}^\circ\otimes\cdots \otimes E_{i_n}^\circ
  \LARGE)
  \]
where $E_j=E_j^\circ\oplus \gB\xi_j$.

  We also define
  \[
  \mathcal{E}(k)=\gB\xi\oplus\bigoplus_{n=1}^\infty
  \big(
  \bigoplus_{\substack{i_1\neq\dots\neq i_n\\ i_1\leq k}} E_{i_1}^\circ\otimes\cdots \otimes E_{i_n}^\circ
  \big)
  \]
 and consider the partial isometries $W_k:\mathcal{E}\lra E_k\otimes \mathcal{E}(k-1)$ given by $W_k\xi=\xi_k\otimes\xi$ and, for $f_1\otimes\cdots\otimes f_n\in E_{i_1}^\circ\otimes\cdots \otimes E^\circ_{i_n}$,
 \[
  W_kf_1\otimes\cdots\otimes f_n=
  \left\{
  \begin{array}{lcc}
  0,& \text{if}\ i_1>k\\
  f_1\otimes\cdots\otimes f_n, & \text{if}\ i_1=k\\
  \xi_k\otimes f_1\otimes\cdots\otimes f_n, &  \text{if}\ i_1<k.
  \end{array}
  \right.
\]

 For $T\in\gA_i\subseteq \cL(E_i)$, define $u_i(T)=W_k^\ast (T\otimes \text{Id}_{\mathcal{E}(k-1)})W_k$ and $\psi(\cdot)=\langle \cdot\xi, \xi\rangle$.
 Since $\underset{i\in I}{\rhd}E_i=E$ is a sub-bimodule of $\mathcal{E}$ and $u_i(a)_{|E}=j_i(a)$, it follows that Proposition \ref{prop41}  holds true also for the family $\{u_i(\gA_i)\}_{i\in I}$.
 
 To see that $\{u_i(\gA_i)\}_{i\in I}$ do not satisfy the relations from Remark \ref{remark42}, consider $i_1<i_2>i_3$ from $I$ and for $j=1,2,3$ take $a_j\in\gA_{i_j}$ such that $\widehat{a_j^\ast}\in E_{i_j}^\circ$ (that is $\psi(u_{i_j}(a_j))=0$). Consider also $f_2=\widehat{a_2^\ast}$ and $f_3=\langle f_2, f_2\rangle\widehat{a_3^\ast}$. 
 
  Denoting $A_j=u_{i_j}(a_j)$, we have that $\psi(A_2)=0$, hence $A_1\psi(A_2) A_3=0$. On the other hand, since $\langle a_3f_3, \xi\rangle=\langle f_3, f_3\rangle\neq0$, we have that 
  $a_3f_3=\zeta+\langle f_3, f_3\rangle$ with $\zeta\in E_{i_3}^\circ$. Therefore
  \begin{eqnarray*}
  A_2A_3f_3\otimes f_2&=&a_2(\zeta\otimes f_2 +\langle f_3, f_3\langle f_2\\
  &=&\widehat{a_2}\otimes\zeta\otimes f_3+A_2\langle f_3, f_3\rangle f_2.
  \end{eqnarray*}
 Since $\widehat{a_2}\in E_{i_2}^\circ$ and $i_1<i_2$, we have that
 \begin{eqnarray*} 
 A_1A_2A_3f_3\otimes f_2
 &=&
 A_1A_2\langle f_3, f_3\rangle f_2\\
 &=&
 \widehat{a_1}\langle A_2\langle f_3, f_3\rangle f_2, \xi\rangle\\
 &=&\widehat{a_1}\langle \langle f_2, f_2\rangle\widehat{a_3^\ast}, \langle f_2, f_2\rangle\widehat{a_3^\ast}\rangle\neq 0.
  \end{eqnarray*}
   
    \begin{lemma}(\emph{see \cite{dykema1}, Lemma 1.1})\label{dykema1}
    Let $\{\gA_i\}_{i\in I}$ be a family of unital C$^\ast$-algebras containing a common unital C$^\ast$-subalgebra $\gB$ and having conditional expectations $\psi_i:\gA_i\lra \gB$ whose GNS representations are faithful. Let
    \[ (\gA,\psi)=\underset{i\in I}{\rhd} (A_i, \psi_i)\]
    be their monotone product of C$^\ast$-algebras as defined in Section \ref{monotcalg}. Then for every $i_0>0$ there exists a conditional expectation $\Psi_{i_0}:\gA\lra\gA_{i_0}$ such that ${\Psi_{i_0}}_{|\gA_{i}}=\psi_i$ for every
    $i\neq i_0$ and, if $a_k\in\gA_{i_k}, i_s\neq i_{s+1}$, then
    \begin{equation}\label{lem01rel}
   \Psi_{i_0}(a_1\cdots a_n)=\left\{
   \begin{array}{ll}\Psi_{i_0}(a_1)\Psi_{i_0}(a_2\cdots  a_n)& \text{if}\ i_1>i_2\\
   \Psi_{i_0}(a_1\cdots \psi_{i_k}(a_k)\cdots a_n)&  \text{if}\ i_{k-1}<i_k>i_{k+1}\\
   \Psi_{i_0}(a_1\cdots a_{n-1})\Psi_{i_0}(a_n)& \text{if}\ i_n>i_{n-1}\\
   \end{array}
   \right..
    \end{equation}
    
   \end{lemma}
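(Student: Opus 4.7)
The plan is to follow Dykema's proof of Lemma 1.1 in \cite{dykema1} with the free product replaced by the monotone product. I would first define $\Psi_{i_0}$ combinatorially on reduced words via (\ref{lem01rel}) and then establish complete positivity by realizing $\Psi_{i_0}$ as a Stinespring compression against a distinguished vector in a monotone Fock bimodule amalgamated over $\gA_{i_0}$ instead of $\gB$.

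Using (\ref{lem01rel}) together with the prescribed boundary values $\Psi_{i_0}|_{j_{i_0}(\gA_{i_0})}=\mathrm{Id}$ and $\Psi_{i_0}|_{j_i(\gA_i)}=\psi_i$ for $i\neq i_0$, I would define $\Psi_{i_0}$ on the dense $\ast$-subalgebra generated by the $j_i(\gA_i)$. By Remark \ref{remark42} the peak-elimination rule in (\ref{lem01rel}) coincides with an algebra identity in $\gA$, so iterated reductions send every alternating word $a_1\cdots a_n$ to a ``valley-shaped'' word (indices first strictly decreasing, then strictly increasing). Iterating the first and third formulas of (\ref{lem01rel}) on such a word yields the closed form
\[
\Psi_{i_0}(a_1\cdots a_n)=\epsilon_{i_1}(a_1)\cdots\epsilon_{i_n}(a_n),\qquad \epsilon_i(a)=\begin{cases}a,& i=i_0,\\ \psi_i(a)\in\gB,& i\neq i_0.\end{cases}
\]
Well-definedness is immediate because both (\ref{lem01rel}) and Remark \ref{remark42} reduce peaks by the same rule, so different reduction paths produce the same valley-shaped representative in $\gA$.

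For complete positivity, I would construct a Hilbert $\gA_{i_0}$-$\gA_{i_0}$-bimodule $F$ by redoing the construction of Section \ref{monotcalg} with $\gB$ replaced by $\gA_{i_0}$. For each $i\neq i_0$ set $F_i=\gA_{i_0}\otimes_\gB L^2(\gA_i,\psi_i)^\circ\otimes_\gB\gA_{i_0}$ and
\[
F=\gA_{i_0}\xi\oplus\bigoplus_{n=1}^{\infty}\bigoplus_{\substack{i_1>\cdots>i_n\\ i_k\neq i_0}}F_{i_1}\otimes_{\gA_{i_0}}\cdots\otimes_{\gA_{i_0}}F_{i_n}.
\]
Then represent each $\gA_i$ with $i\neq i_0$ on $F$ via adjointable partial isometries $\widetilde{V}^F_k:F\to F_k\otimes_{\gA_{i_0}}F(k-1)$ as in Section \ref{monotcalg}, and let $\gA_{i_0}$ act by amalgamation (left multiplication on $\xi\gA_{i_0}$ and as bimodule coefficients on the tensor factors). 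Setting $\Psi_{i_0}(T)=\langle\xi,T\xi\rangle_{\gA_{i_0}}$ displays $\Psi_{i_0}$ as a Stinespring compression, hence completely positive, and a direct computation on valley words recovers the closed form above and the prescribed restrictions to each $\gA_i$.

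The main obstacle is verifying that the joint action of $\{\gA_i\}_{i\in I}$ on $F$ descends from the algebraic free product to the monotone product C$^\ast$-algebra $\gA$, i.e.\ respects the peak identity of Remark \ref{remark42}. For a peak $k_1<k_2>k_3$ with $k_2\neq i_0$, the argument in the proof of Remark \ref{remark42} applies verbatim on $F$, since the partial-isometry structure of $\widetilde{V}^F_{k_2}$ is the same as that of $\widetilde{V}_{k_2}$ modulo the change of coefficient algebra. For $k_2=i_0$ the identity holds automatically because the middle letter $a_2\in\gA_{i_0}$ is absorbed by the $\gA_{i_0}$-amalgamation of the surrounding factors, so no separate reduction is needed. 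Once this compatibility is settled, positivity is automatic and the formulas in (\ref{lem01rel}) read off directly from the monotone Fock structure of $F$, in exact analogy with Proposition \ref{prop41}.
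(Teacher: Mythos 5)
Your construction breaks down at exactly the point you dismiss as automatic: the peak case $k_1<k_2>k_3$ with $k_2=i_0$. By Remark \ref{remark42}, the identity $a_1\,c\,a_3=a_1\,\psi_{i_0}(c)\,a_3$ holds \emph{in the algebra} $\gA$ also when the middle letter $c$ comes from $\gA_{i_0}$; correspondingly, the second line of (\ref{lem01rel}) applies $\psi_{i_k}$ for every $i_k$, including $i_k=i_0$. A representation in which $c$ is ``absorbed by the $\gA_{i_0}$-amalgamation'' keeps $c$ intact as an interleaved coefficient, and so violates this relation: compressing the word $a_1ca_3$ ($k_1,k_3<i_0$, $c\in\gA_{i_0}$) on your $F$ yields $\psi_{k_1}(a_1)\,c\,\psi_{k_3}(a_3)$, whereas the identity in $\gA$ forces the value $\psi_{k_1}(a_1)\,\psi_{i_0}(c)\,\psi_{k_3}(a_3)$; these differ whenever $c\neq\psi_{i_0}(c)$. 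Hence the joint action you propose does not descend from the algebraic free product to the monotone product C$^\ast$-algebra $\gA$, and $\Psi_{i_0}$ is never defined on $\gA$ at all. The interleaved shape $F_i=\gA_{i_0}\otimes_\gB E_i^\circ\otimes_\gB\gA_{i_0}$ with tensors over $\gA_{i_0}$ is the correct template for the \emph{free} product amalgamated over $\gA_{i_0}$ as in \cite{dykema1}, but the monotone peak reduction of Remark \ref{remark42} kills interior $\gA_{i_0}$-coefficients; tellingly, the module the paper itself writes down right after the lemma, $F=\gA_{i_0}\oplus\bigoplus_{n\geq1}\bigoplus_{i_1>\cdots>i_n,\ i_j\neq i_0}E_{i_1}^\circ\otimes_\gB\cdots\otimes_\gB E_{i_n}^\circ\otimes_\gB\gA_{i_0}$, carries a single \emph{trailing} copy of $\gA_{i_0}$ and no interleaved coefficients for precisely this reason. (A further unresolved point: for $k\neq i_0$ the algebra $\gA_k$ does not contain $\gA_{i_0}$, so $\widetilde{V}^F_k$ has no canonical action on a vector whose leading coefficient lies in $\gA_{i_0}\setminus\gB$; your description never defines it.) Your combinatorial preamble has a secondary gap of the same flavor: ``well-definedness is immediate'' is not justified, since valley-shaped words need not be linearly independent in $\gA$ and boundedness on the dense span is not addressed -- which is exactly why a spatial definition is needed in the first place.

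The paper's actual proof requires no new module and no descent argument: it uses the \emph{defining} action of $\gA$ on the monotone product bimodule $(E,\xi)=\underset{i\in I}{\rhd}(E_i,\xi_i)$, identifies the sub-bimodule $\xi\gB\oplus E_{i_0}^\circ\subset E$ with $E_{i_0}=L^2(\gA_{i_0},\psi_{i_0})$, and sets $\Psi_{i_0}(x)=Q_{i_0}xQ_{i_0}$, where $Q_{i_0}$ is the projection onto that sub-bimodule. Because $\gA$ already acts on $E$, well-definedness, boundedness and complete positivity are automatic; that $Q_{i_0}xQ_{i_0}$ lands in $\gA_{i_0}$ (rather than merely in $\cL(E_{i_0})$) is where the faithfulness of the GNS representation of $\psi_{i_0}$ enters; and the relations (\ref{lem01rel}) -- with the correct reduction $\psi_{i_0}(c)$ at interior $i_0$-peaks -- are then read off the monotone Fock structure exactly as in Proposition \ref{prop41}. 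If you want to salvage your approach, replace your $F$ by the paper's trailing-coefficient module above and verify the action there; as written, the proposal proves a statement about a different (free-type) object, not about $\gA$.
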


   \begin{proof}
    Let $E_i=L^2(\gA_i, \psi_i), \xi_i=\widehat{1_{\gA_i}}\in E_i$, $E_i=\xi_i\gB\oplus E_i^\circ$. By construction, the algebra $\gA$ acts on the Hilbert bimodule $(E,\xi)=\underset{i\in I}{\rhd}(E_i, \xi_i)$. Identify the submodule $\xi\gB\oplus E_{i_0}^\circ$ with $E_{i_0}$ and let $Q_{i_0}:E\lra E_{i_0}$ be the projection. Then $\Psi_{i_0}(x)=Q_{i_o}x Q_{i_0}$ has the desired properties.
   \end{proof}

   \begin{remark}
   \end{remark}
    With the notations from Section \ref{monotcalg}, consider
     \[
    F=\gA_{i_0}\oplus \bigoplus_{n\geq1}\bigoplus_{i_1>\cdots i_n\neq i_0} E^\circ_{i_1}\otimes_{\gB}\cdots E^\circ_{i_n}\otimes_{\gB}\otimes_{\gB}\gA_{i_0}.
    \]
    Then $\Psi_{i_0}=\langle\cdot 1_{\gA_{i_0}}, 1_{\gA_{i_0}}\rangle$.

    Let $\rho:\gA_{i_0}\lra \cL(K)$ be a unital $\ast$-homomorphism for some Hilbert space $K$. Then $\rho$ induces a $\ast$-homomorphism $\rho|^\gA:\gA\lra\cL(F\otimes_\rho K)$ determined by its restrictions $\rho_i=\rho_{|\gA_i}\lra\cL(F\otimes_\rho K)$ given as follows.

      Writing $\mathcal{K}=F\otimes_\rho K$, we have
      \[
        \mathcal{K}=K\oplus \bigoplus_{n\geq1}\bigoplus_{i_1>\cdots i_n\neq i_0} E^\circ_{i_1}\otimes_{\gB}\cdots E^\circ_{i_n}\otimes_{\gB}\otimes_\rho K.
\]
 Consider the Hilbert spaces
 \begin{eqnarray*}
 \mathcal{K}(i)&=&(\eta_i\gB\otimes_{\rho_{|\gB}}K)\oplus\bigoplus_{n\geq1}
 \bigoplus_{\substack{i_1>\cdots i_n\\ i>i_1, i_n\neq i_0}} E^\circ_{i_1}\otimes_{\gB}\cdots E^\circ_{i_n}\otimes_{\gB}\otimes_\rho K.\ \text{if}\ i\neq i_0\\
 \mathcal{K}(i_0)&=&\bigoplus_{n\geq1}
 \bigoplus_{\substack{i_1>\cdots i_n\\ i_0>i_1, i_n\neq i_0}} E^\circ_{i_1}\otimes_{\gB}\cdots E^\circ_{i_n}\otimes_{\gB}\otimes_\rho K.\ \\
 \end{eqnarray*}
where $\eta_i\gB$ is just the Hilbert $\gB$-bimodule $\gB$ with identity element denoted by $\eta_i$. If $i\neq i_0$, consider the partial isometry
$W_i:E_i\otimes_{\gB}\mathcal{K}(i)\lra\mathcal{K}$ given by
\begin{eqnarray*}
\xi_i\otimes(\eta_i\otimes v)&\mapsto& v\\
\zeta_i\otimes(\eta_i\otimes v)&\mapsto& \zeta\otimes v\\
\xi_i\otimes(\zeta_1\otimes\cdots\otimes\zeta_n\otimes v)&\mapsto&\zeta_1\otimes\cdots\otimes\zeta_n\otimes v\\
\zeta\otimes(\zeta_1\otimes\cdots\otimes\zeta_n\otimes v)&\mapsto&\zeta\otimes\zeta_1\otimes\cdots\otimes\zeta_n\otimes v
\end{eqnarray*}
for all $v\in K$, $\zeta\in E_i^\circ, \zeta_j\in E_{i_j}^\circ$. Then for every $a\in\gA_i$ with $i\neq i_0$ we have
\[\rho_i(a)=W_i(a\otimes \text{Id}_{\mathcal{K}(i)})W_i^\ast.\]
Similarly, for $i_0$, we define the partial isometry $W_{i_0}:K\oplus(E_{i_0}\otimes_{\gB}\mathcal{K}({i_0}))\lra\mathcal{K}$ given by
\begin{eqnarray*}
v\oplus 0&\mapsto&v\\
0\oplus(\xi_{i_0}\otimes(\zeta_1\otimes\cdots\otimes \zeta_n\otimes v)
&\mapsto&
\zeta_1\otimes\cdots\otimes \zeta_n\otimes v\\
0\oplus(\zeta\otimes(\zeta_1\otimes\cdots\otimes \zeta_n\otimes v)
&\mapsto&
\zeta\otimes\zeta_1\otimes\cdots\otimes \zeta_n\otimes v.
\end{eqnarray*}
Then $\rho_{i_0}(a)=W_{i_0}(\rho(a)\oplus(a\otimes 1_{\mathcal{K}(i_0)}))W_{i_0}^\ast$. Note that the above description is related to the construction  from Section 2.

   \subsection{Embeddings of monotone products of C$^\ast$-algebras and completely positive maps}${}$\\

   \begin{prop}\emph{(see \cite{dykema1}, Theorem 1.3)}\label{dykema111} Let $\gB\subseteq\widetilde{\gB}$ be a (not necessarily unital) inclusion of unital C$^\ast$-algebras. For each $i\in I$ , suppose
   \[
   \begin{array}{ccccc}
   1_{\widetilde{\gA_i}} & \in & \widetilde{\gB}& \subseteq &\widetilde{\gA_i}\\
                         &     & \cup           &           &  \cup \\
      1_{\gA_i} & \in & \gB& \subseteq &{\gA_i}\\
   \end{array}
   \]
   are inclusions of c$^\ast$-algebras.  Suppose that $\widetilde{\psi_i}:\widetilde{\gA_i}\lra \widetilde{\gB}$ are conditional expectations such that $\widetilde{\psi_i}(\gA_i)\subseteq \gB$ and assume that $\widetilde{\psi_i}$ and the restrictions $\widetilde{\psi_i}_{|\gA_i}$ have faithful GNS representations. Let
   \begin{eqnarray*}
   (\widetilde{\gA}, \widetilde{\psi})&=&\underset{i\in I}{\rhd} (\widetilde{\gA_i}, \widetilde{\psi_i})\\
   ({\gA}, {\psi})&=&\underset{i\in I}{\rhd} ({\gA_i}, {\psi_i})
   \end{eqnarray*}
   be the monotone products of C$^\ast$-algebras. Then there is a unique $\ast$-homomorphism $\kappa:\gA\lra\widetilde{\gA}$ such that for every $i\in I$ the diagram
   \[
   \begin{array}{ccccccc}
   \widetilde{\gA_i}&\longhookrightarrow & \widetilde{\gA}\\
                \cup&                     &\hspace{.3cm}\uparrow \kappa\\
     {\gA_i}&\longhookrightarrow & {\gA}\\
   \end{array}
   \]
   commutes, where the horizontal arrows are the inclusions arising from the monotone product construction. Moreover, $\kappa$ is necessarily injective.
   \end{prop}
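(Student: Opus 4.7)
\emph{Proof plan.} The strategy is to construct $\kappa$ as the inverse of a compression homomorphism induced by an isometric embedding of the Hilbert bimodule $E$ into $\widetilde{E}$.

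\emph{Bimodule embedding.} Since $\widetilde{\psi}_i(\gA_i)\subseteq\gB$ and $\widetilde{\psi}_i|_{\gA_i}=\psi_i$ has faithful GNS representation, the natural map $\iota_i:E_i\to\widetilde{E}_i$, $\widehat{a}\mapsto\widehat{a}$ for $a\in\gA_i$, is a well-defined isometric $\gB$-bimodule embedding that sends $\xi_i$ to $\widetilde{\xi}_i\cdot 1_\gB$ and $E_i^\circ$ into $\widetilde{E}_i^\circ$. Because the inner products on the tensor factors are $\gB$-valued and coincide with those on $\widetilde{E}$, the $\iota_i$ assemble into an isometric $\gB$-bimodule embedding $\iota:E\hookrightarrow\widetilde{E}$ preserving the monotone grading. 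A case analysis on the leading tensor index, using the decomposition $a\widetilde{\xi}_i=\widetilde{\xi}_i\psi_i(a)+(a-\psi_i(a)1_{\gA_i})\widetilde{\xi}_i$ (the second summand lying in $\iota_i(E_i^\circ)$) together with the formula $\widetilde{j}_i(a)=\widetilde{V}_i^\ast(a\otimes\mathrm{Id})\widetilde{V}_i$, then shows that for every $a\in\gA_i$ the operator $\widetilde{j}_i(a)$ maps $\iota(E)$ into itself and agrees there with $\iota\circ j_i(a)\circ\iota^{-1}$.

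\emph{Compression homomorphism and injectivity.} Let $C\subseteq\widetilde{\gA}$ be the C$^\ast$-subalgebra generated by $\bigcup_{i\in I}\widetilde{j}_i(\gA_i)$. Since the class of operators preserving the closed subspace $\iota(E)$ is stable under sums, products, adjoints (each $\widetilde{j}_i(a^\ast)$ is again a generator) and norm limits, every $T\in C$ and $T^\ast$ preserve $\iota(E)$, so $\sigma:C\to\cL(E)$, $\sigma(T)=T|_{\iota(E)}$, is a well-defined $\ast$-homomorphism. Its image is a C$^\ast$-subalgebra of $\cL(E)$ containing every $j_i(\gA_i)$, hence equals $\gA$. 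For injectivity, note that the common unit $p=1_\gB$ satisfies $pa=ap=a$ for every $a\in\gA_i$ and hence $pT=Tp=T$ for every $T\in C$. Therefore
\[
T\widetilde{\xi}=T(p\widetilde{\xi})=T\iota(\xi)=\sigma(T)\iota(\xi),
\]
so $\sigma(T)=0$ implies $\widetilde{\psi}(T^\ast T)=\langle T\widetilde{\xi},T\widetilde{\xi}\rangle_{\widetilde{\gB}}=0$; faithfulness of $\widetilde{\psi}$ on $\widetilde{\gA}$, inherited from that of the $\widetilde{\psi}_i$ by iterating the conditional expectations $\widetilde{\Psi}_{i_0}$ of Lemma \ref{dykema1}, then forces $T=0$.

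\emph{Conclusion.} Set $\kappa=\sigma^{-1}:\gA\to C\hookrightarrow\widetilde{\gA}$. By construction $\kappa(j_i(a))=\widetilde{j}_i(a)$ for every $a\in\gA_i$, which makes the required diagram commute, and $\kappa$ is injective as the inverse of a $\ast$-isomorphism. Uniqueness follows because $\bigcup_i j_i(\gA_i)$ generates $\gA$ as a C$^\ast$-algebra. The only genuinely delicate point is the faithfulness of $\widetilde{\psi}$ required in the injectivity step; the remainder is a routine Hilbert-bimodule adaptation of Dykema's argument in the free-product case \cite{dykema1}.
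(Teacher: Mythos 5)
Your first step --- the isometric embedding $\iota:E\hookrightarrow\wE$ and the verification that each $\widetilde{j}_i(a)$, $a\in\gA_i$, leaves $\iota(E)$ invariant and restricts there to $j_i(a)$ --- is correct and coincides with the easy half of the paper's argument (it is what the paper uses to get uniqueness and the inequality $\|\sigma(x)\|\leq\|\widetilde{\sigma}(x)\|$, hence injectivity of $\kappa$ \emph{once it exists}). The genuine gap is your injectivity argument for the compression $\sigma:C\to\gA$: it rests on the claim that $\widetilde{\psi}$ is faithful on $\widetilde{\gA}$, and this is false for monotone products. Indeed, take $k<l$, $a\in\gA_k$, and $b\in\gA_l$ with $\widetilde{\psi}_l(b)=0$, and set $T=\widetilde{j}_k(a)\widetilde{j}_l(b)\in C$. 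Then $\widetilde{j}_l(b)\widetilde{\xi}=\widehat{b}\in\wE_l^\circ$, and since $\widetilde{V}_k$ annihilates every tensor with leading index $l>k$, we get $T\widetilde{\xi}=0$, hence $\widetilde{\psi}(T^\ast T)=\langle T\widetilde{\xi},T\widetilde{\xi}\rangle=0$; yet $T\neq0$ in general, since for $c\in\gA_l$ with $\widetilde{\psi}_l(bc)\neq0$ one has $T\,\widehat{c}^{\,\circ}=\widetilde{j}_k(a)\big(\widetilde{\psi}_l(bc)\,\widetilde{\xi}\big)+0\neq0$. This failure of the vacuum expectation to separate points is exactly the phenomenon recorded in Remark \ref{remark42} (operators $a(b-\psi(b))$ with $a\in j_k(\gA_k)$, $b\in j_l(\gA_l)$, $k<l$, vanish on all of $E\ominus E_l^\circ\otimes E(l-1)$), and no iteration of the expectations $\Psi_{i_0}$ of Lemma \ref{dykema1} can restore it, as those are compressions to sub-bimodules and lose the action on $\wE\ominus\iota(E)$. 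Note also that your chain of implications, even formally, only passes through the single vector $\widetilde{\xi}\in\iota(E)$, so it could never control $T$ on the complement of $\iota(E)$, which is where the whole difficulty lives. (Two smaller points: the element $p$ with $pT=Tp=T$ for all $T\in C$ does not exist when $I$ has no maximal element, because $\widetilde{j}_i(1_{\gA_i})$ is the projection onto $\wE(i)$ and depends on $i$; the paper instead reduces to unital inclusions by replacing $\gB$ with $\gB+\mathbb{C}(1_{\widetilde{\gB}}-1_{\gB})$.)

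The actual content of the proposition is precisely the passage from ``$T$ vanishes on $\iota(E)$'' to ``$T=0$'', equivalently the inequality $\|\widetilde{\sigma}(x)\|\leq\|\sigma(x)\|$ for $x$ in the algebraic free product $\textgoth{A}$, and this is where the paper does real work: it tensors with a faithful representation $\tau$ of $\widetilde{\gB}$ on a Hilbert space $\mathcal{W}$ and decomposes
\[
\wE\otimes_\tau\mathcal{W}=(E\otimes_{\tau_{|\gB}}\mathcal{W})\oplus\bigoplus_{n\geq1}\bigoplus_{i_1>\cdots>i_n}\widetilde{\mathcal{W}}(i_1,\dots,i_n)
\]
into invariant subspaces, identifying the representation on each $\widetilde{\mathcal{W}}(i_1,\dots,i_n)$ with $(\nu|^{\gA})\circ\sigma$, where $\nu|^{\gA}$ is induced from a representation $\nu$ of $\gA_{i_1}$ via the conditional expectation $\Psi_{i_1}:\gA\to\gA_{i_1}$ of Lemma \ref{dykema1}; since every summand thus factors through $\sigma$, the norm inequality follows. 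Your proposal contains no counterpart of this decomposition (nor any other mechanism reaching the complement of $\iota(E)$), so as written the proof does not go through.
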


\begin{proof}
 Note that, since $\gA$ is generated by $\bigcup\gA_i$, it is clear that $\kappa$ will be unique if it exists. Also, we can suppose that the inclusions $\gB\subseteq\widetilde{\gB}$ and $\gA\subseteq\widetilde{\gA}$ are unital: if $1_{\gB}\neq 1_{\widetilde{\gB}}$, then we may replace $\gB$ by $\gB+\mathbb{C}(1_{\widetilde{\gB}}-1_{\gB})$ and each $\gA_i$ by $\gA_i+\mathbb{C}(1_{\widetilde{\gA_i}}-1_{\gA_i})$.

 Let
$(\widetilde{\pi}_i, \wE_i, \widetilde{\xi}_i)=\text{GNS}(\widetilde{\gA_i}, \widetilde{\psi_i}), \ \ \
  ({\pi}_i, \wE_i, {\xi}_i)=\text{GNS}({\gA_i}, {\psi_i})
 $\  and
 $(\wE, \widetilde{\xi})=\underset{i\in I}{\rhd}(\wE_i, \widetilde{\xi_i})$, respectively
 $(E, {\xi})=\underset{i\in I}{\rhd}(E_i, {\xi_i}).$

 The inclusion $\gA_i\hookrightarrow \widetilde{\gA_i}$ gives an inner-product-preserving isometry of Banach spaces $E_i\hookrightarrow \wE_i$
 sending $\xi_i$ to $\widetilde{\xi}_i$ and $E_i^\circ$ to a subspace of $\wE_i^\circ$ and allowing, for each $i_1>\cdots>i_n$,  a cannonical identification of
 \[
 E_{i_1}^\circ\otimes_{\gB}\cdots\otimes_{\gB} E_{i_{p-1}}^\circ\otimes_{\gB}
 \wE_{i_p}^\circ\otimes_{\widetilde{\gB}}\cdots
 \otimes_{\widetilde{\gB}}\wE_{i_n}^\circ
 \]
 with the a closed subspace of
 $
 \wE_{i_1}^\circ\otimes_{\widetilde{\gB}}\cdots \otimes_{\widetilde{\gB}}
 \wE_{i_n}^\circ$. We will identify $E$ with a subspace of $\wE$ as follows:
 \[
 E\cong\widetilde{\xi}\gB\oplus\bigoplus_{n=1}^\infty
  \LARGE(
  \bigoplus_{i_1>\dots>i_n} E_{i_1}^\circ\otimes_\cB\cdots \otimes_\cB E_{i_n}^\circ
  \LARGE)\subset\wE.
 \]

  Let now $\textgoth{A}=\underline{\free}\gA_i$ be the universal algebraic free product \emph{without} amalgamation. Let $\sigma:\textgoth{A}\lra\cL(E)$,  respectively $\widetilde{\sigma}:\textgoth{A}\lra\cL(\wE)$ be the homomorphism extending the homomorphisms $\pi_i:\gA_i\lra\cL(E)$, respectively $\widetilde{\pi}_{i|\gA_i}:\gA_i\lra\cL(\wE)$ (particularly, $\overline{\sigma(\textgoth{A})}=\gA$).

  In order to show that $\kappa$ exists, it suffices to show that $||\widetilde{\sigma}(x)|| \leq ||\sigma(x)||$, for all $x\in\textgoth{A}$.

   Note that $||\widetilde{\sigma}(x)|| \geq ||\sigma(x)||$ for all $x\in\textgoth{A}$, since the subspace $E$ of $\wE$ is invariant under $\widetilde{\sigma}(\textgoth{A})$ and $\widetilde{\sigma}_{|E}=\sigma$. Henceforth, if $\kappa$ exists, then it is injective.

   Let $\tau$ be a faithful representation of $\widetilde{\gB}$ on a Hilbert space $\mathcal{W}$, then consider the Hilbert space $\wE\otimes_\tau\mathcal{W}$ and let $\widetilde{\lambda}:\cL(\wE)\lra \cL(\wE\otimes_{\tau}\mathcal{W})$ be the $\ast$-homomorphism given by $\widetilde{\lambda}(x)=x\otimes 1_\mathcal{W}$. $\widetilde{\lambda}$ is faithful, hence it will suffice to show that $||\widetilde{\lambda}\circ \widetilde{\sigma}(x)||\leq ||\sigma(x)||$ for all $x\in\textgoth{A}$.

    We will show that $\widetilde{\lambda}\circ \widetilde{\sigma}$  decomposes as a direct sum of subrepresentations, each of which is of the form $(\nu|^\gA)\circ\sigma$, where $\nu|^\gA$ is the $\ast$-representation of $\gA$ induced from a representation $\nu$ of some $\gA_i$.

    For $n>0$ and $i_1>\cdots i_n$ and $1\leq p \leq n$, consider the Hilbert space\\
  $H^{(i_1,\dots, i_n)}_p=E_{i_1}^\circ\otimes_{\gB}\cdots
    \otimes_{\gB} E_{i_{p-1}}^\circ\otimes_{\gB}
    K_{i_p}\otimes_{\widetilde{\gB}}\wE_{i_{p+1}}^\circ \otimes_{\widetilde{\gB}}
    \cdots
 \otimes_{\widetilde{\gB}}\wE_{i_n}^\circ\otimes_\tau\mathcal{W}$
 defined as
   \begin{eqnarray*}
    E_{i_1}^\circ\otimes_{\gB}
 \cdots
 \otimes_{\gB} E_{i_{p-1}}^\circ\otimes_{\gB}
    \wE^\circ_{i_p}\otimes_{\widetilde{\gB}}
    &\cdots&
  \otimes_{\widetilde{\gB}}\wE_{i_n}^\circ\otimes_\tau\mathcal{W}\ominus\\
 E_{i_1}^\circ\otimes_{\gB}
 &\cdots&
 \otimes_{\gB} E_{i_{p-1}}^\circ\otimes_{\gB}
    E^\circ_{i_p}\otimes_{\widetilde{\gB}}
    \cdots
 \otimes_{\widetilde{\gB}}\wE_{i_n}^\circ\otimes_\tau\mathcal{W}.
    \end{eqnarray*}

    Then
    \[
    \wE\otimes_\tau\mathcal{W}=(E\otimes_{\tau_{|\gB}}\mathcal{W})\oplus
    \LARGE(
    \bigoplus_{n=1}^\infty\bigoplus_{\substack{i_1>\cdots>i_n\\ 1\leq p \leq n}}
    H^{(i_1,\dots, i_n)}_p \LARGE).
    \]

     As previously mentioned, $\widetilde{\sigma}(\textgoth{A})E\subseteq E$ and
     $\widetilde{\sigma}_{|E}=\sigma$, so $E\otimes_{\tau_{|\gB}}\mathcal{W}$ is invariant under $\widetilde{\lambda}\circ\widetilde{\sigma}(\textgoth{A})$, and
     $||\widetilde{\lambda}\circ\widetilde{\sigma}(x)_{|E\otimes_\tau\mathcal{W}}||
     =||\sigma(x)||$  for all $x\in\textgoth{A}$.

     Define $\widetilde{\mathcal{W}}(i_1, \cdots, i_n)=
     \overline{
     \widetilde{\lambda}\circ\widetilde{\sigma}(\textgoth{A})H_1^{(i_1,\dots, i_n)}
     }$. Since $\widetilde{\pi}_i(\gA_i)E_i\subseteq E_i$ we have that
     \[
     \widetilde{\mathcal{W}}(i_1, \cdots, i_n)= H_1^{(i_1,\dots, i_n)}\oplus
     \LARGE( \bigoplus_{l\geq1}
     \bigoplus_{\substack{k_1>\cdots k_l\\k_l>i_1}}E_{k_1}^\circ\otimes_{\gB}\cdots E_{k_s}
     \otimes_{\gB}H_1^{(i_1,\dots, i_n)}
     \LARGE).
     \]

      Thus,
      \[
      \wE\otimes_\tau\mathcal{W}=(E\otimes_{\tau_{|\gB}}\mathcal{W})\oplus
      \bigoplus_{n\geq1}\bigoplus_{i_1>\cdots i_n} \widetilde{\mathcal{W}}(i_1,\dots, i_n);
      \]
  Hence to prove the theorem it will suffice to show that for all $i_1>\cdots > i_n$
  and all $x\in \textgoth{A}$,
  \begin{equation}\label{dykema6}
  ||\widetilde{\lambda}\circ\widetilde{\sigma}(x)_{|\widetilde{\mathcal{W}}(i_1,\dots, i_n)}||\leq ||\sigma(x)||.
  \end{equation}

  But letting $\nu:\gA_{i_1}\lra\cL(H_1^{(i_1,\dots, i_n)})$ be the $\ast$-homomorphism
  \[
  \nu(a)=(\widetilde{\pi}_{i_1}(a)\otimes 1_{
  \wE_{i_2}^\circ \otimes_{\widetilde{\gB}}
    \cdots
 \otimes_{\widetilde{\gB}}\wE_{i_n}^\circ\otimes_\tau\mathcal{W}
  }\Large)_{|H_1^{(i_1,\dots, i_n)}}
  \]
  and considering $\nu|^\gA$ be the representation of $\gA$ induced from $\nu$ with respect to the conditional expectation $\Psi_{i_1}:\gA\lra\gA_{i_1}$ found in Lemma \ref{dykema1},
  it is straightforward to check that
  \[
  \widetilde{\lambda}\circ\widetilde{\sigma}_{|\widetilde{\mathcal{W}}(i_1, \cdots, i_n)}=\LARGE(\nu|^\gA\LARGE)\circ\sigma,
  \]
  which, in turn implies (\ref{dykema6}).
 %%%%%%%%%%%%%%%%%%%%%%%%%%%%%%%%%%%%%%%%%%%%%%%%%%%%%%%%%%%%%%%%%%%%%%%%%%%%%
\end{proof}

   \begin{thm}\emph{(see \cite{dykema1}, Theorem 2.2)}\label{thm11}
   Let $\cB$ be a unital C$^\ast$-algebra, and for every $i\geq 0$ let $\cA_i$ and $D_i$ be unital C$^\ast$-algebras containing copies of $\cB$ as unital c$^\ast$-subalgebras and having conditional expectations $\phi_i:\cA_i\lra\cB$, respectively $\psi_i:D_i\lra\cB$, whose GNS representations are faithful. Suppose that for each $i\geq0$ there is a unital completely positive map $\theta_i:\cA_i\lra D_i$ that is also a $\cB$ bimodule map and satisfies $\psi_i\circ\theta_i=\phi_i$. Denote
   \begin{eqnarray*}
   (\cA,\phi)&=&\underset{\scriptscriptstyle{i\in I}}{\rhd}(\cA_i, \phi_i)\\
   (D,\psi)&=&\underset{\scriptscriptstyle{i\in I}}{\rhd}(D_i, \psi_i)
   \end{eqnarray*}
    the monotone products of C$^\ast$-algebras. Then there is a unital completely positive map $\theta:\cA\lra D$ such that for all $i\geq 0$ the diagram

      \[
 \begin{xy}
%vertices
(15,30)*+{\frak{A}_i}="v1";(45,30)*+{\frak{D}_i}="v2";%
(30,15)*+{\frak{B}}="v3";
(15,0)*+{\frak{A}}="v6";(45,0)*+{\frak{D}}="v7";%
%arrows
{\ar@{->} "v1"; "v6"};%
{\ar@{->}^{\theta_i} "v1"; "v2"};
{\ar@{->} "v1"; "v3"};%
{\ar@{->} "v2"; "v7"};
{\ar@{->}|{\psi} "v7"; "v3"};%
{\ar@{->}|{\phi_i} "v1"; "v3"};%
{\ar@{->}|{\psi_i} "v2"; "v3"};%
{\ar@{->}|{\phi} "v6"; "v3"};%
{\ar@{-->}_{\theta} "v6"; "v7"}
\end{xy}
 \]
 commutes, where the vertical arrows are the (non-unital) inclusions arising from the monotone product construction. Moreover, the mapping $\theta$ satisfies:
 \begin{enumerate}
   \item[(i)] $
   \theta(a_1\cdots a_n) =\theta(a_1)\theta (a_2\cdots a_n)
   $,\ if $i_1>i_2$;
   \item[(ii)] $
    \theta(a_1 \cdots a_n)=
    \theta(a_1 \cdots a_{n-1})\theta(a_n)
   $,\ if $i_n>i_{n-1}$;

   \item[(iii)]
   $
   \theta(a_1\cdots a_n)=
   \theta(a_1\cdots a_{l-1}\cdot
   \phi_{i_l}(a_l)\cdot
   a_{l+1}\cdots a_n)
   $
   if  $i_{l-1}<i_l>i_{l+1}$.
\end{enumerate}
   \end{thm}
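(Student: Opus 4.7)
The plan is to build $\theta$ as the descent of the monotone-product-of-maps from Section~3 along the canonical surjection from the universal free product C$^*$-algebra onto the monotone-product C$^*$-algebra. Viewing each $\theta_i:\cA_i\to D_i$ as a u.c.p.\ $\cB$-bimodule map into the ambient monotone product $D$, the corollary to Theorem~\ref{thmboca} produces a u.c.p.\ $\cB$-bimodule map $\widetilde{\theta}:=\underset{i\in I}{\rhd}\theta_i:\widehat{\Free}\cA_i\longrightarrow D$ on the universal free product C$^*$-algebra, which satisfies the recursive identities (\ref{eq1})--(\ref{eq3}) on every reduced word of the algebraic free product.

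The core step is to show that $\widetilde{\theta}$ is constant along the peak relations of Remark~\ref{remark42}, so that it descends to a map $\theta:\cA\to D$ through the canonical quotient $\rho:\widehat{\Free}\cA_i\twoheadrightarrow\cA$. Concretely, for a reduced word $w=a_1\cdots a_n$ with an internal peak at position $k$ (that is, $i_{k-1}<i_k>i_{k+1}$), formula (\ref{eq3}) rewrites
\[
\widetilde{\theta}(w)=\widetilde{\theta}(a_1\cdots a_{k-1}\phi_{i_k}(a_k)a_{k+1}\cdots a_n)+\widetilde{\theta}(a_1\cdots a_{k-1})\bigl[\theta_{i_k}(a_k)-\phi_{i_k}(a_k)\bigr]\widetilde{\theta}(a_{k+1}\cdots a_n),
\]
and I would show that the correction term vanishes in $D$. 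The key structural fact, proved by an induction on word length that unfolds $\widetilde{\theta}$ through (\ref{eq1})--(\ref{eq3}), is that every summand of $\widetilde{\theta}(a_1\cdots a_{k-1})$ is a product in $D$ whose rightmost factor has the form $\theta_{i_{k-1}}(\alpha)$ with $\alpha\in\cA_{i_{k-1}}$, and analogously every summand of $\widetilde{\theta}(a_{k+1}\cdots a_n)$ has leftmost factor $\theta_{i_{k+1}}(\beta)$ with $\beta\in\cA_{i_{k+1}}$. Granted this, Remark~\ref{remark42} applied in $D$ (since $i_{k-1}<i_k>i_{k+1}$) together with $\psi_{i_k}\circ\theta_{i_k}=\phi_{i_k}$ gives, summand by summand,
\[
\theta_{i_{k-1}}(\alpha)\theta_{i_k}(a_k)\theta_{i_{k+1}}(\beta)=\theta_{i_{k-1}}(\alpha)\psi_{i_k}(\theta_{i_k}(a_k))\theta_{i_{k+1}}(\beta)=\theta_{i_{k-1}}(\alpha)\phi_{i_k}(a_k)\theta_{i_{k+1}}(\beta),
\]
so the bracket $\theta_{i_k}(a_k)-\phi_{i_k}(a_k)$ annihilates each summand of the correction. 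Hence $\widetilde{\theta}$ is constant on the equivalence classes defining $\cA$, and the descent $\theta:\cA\to D$ is well-defined, bounded, completely positive and $\cB$-bimodular.

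With $\theta$ in place, the remaining conclusions are immediate: unitality, complete positivity, and the $\cB$-bimodule property all descend from $\widetilde{\theta}$; the diagram commutes because $\theta|_{\cA_i}=\theta_i$ is the $n=1$ instance of (\ref{eq1}); properties (i)--(iii) are exactly the formulas (\ref{eq1})--(\ref{eq3}) of $\widetilde{\theta}$ read through $\rho$, with the correction term of (\ref{eq3}) absorbed into the peak relation of $\cA$; finally, $\psi\circ\theta=\phi$ holds on each $\cA_i$ by hypothesis and propagates to all of $\cA$ via the matching monotone recursions of Proposition~\ref{prop41}.

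The main obstacle is the structural induction underlying the vanishing of the correction term: one must verify that every summand of $\widetilde{\theta}(a_1\cdots a_{k-1})$ indeed retains a rightmost factor in $D_{i_{k-1}}$, even after repeated unfoldings of internal peaks via (\ref{eq3}). The argument is straightforward when (\ref{eq1}) or (\ref{eq2}) can peel off boundary letters without disturbing them, but requires careful bookkeeping at interior peaks, noting that contractions via (\ref{eq3}) only absorb $\cB$-multiples into neighbors and thus never disturb the boundary letters of the word.
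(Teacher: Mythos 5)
Your reduction of the correction term is fine as algebra: the structural induction on rightmost/leftmost factors does go through, and Remark~\ref{remark42} applied in $D$ together with $\psi_{i_k}\circ\theta_{i_k}=\phi_{i_k}$ does kill each summand of the bracket term, so $\widetilde{\theta}=\underset{i\in I}{\rhd}\theta_i$ (from the Corollary to Theorem~\ref{thmboca}) respects the peak relations. The genuine gap is the descent step itself. To factor $\widetilde{\theta}$ through the canonical surjection $\rho:\widehat{\Free}\cA_i\twoheadrightarrow\cA$ you need $\ker\rho\subseteq\ker\widetilde{\theta}$, and nothing in your argument identifies $\ker\rho$ with the closed ideal generated by the peak relations of Remark~\ref{remark42}. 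The monotone product $(\cA,\phi)$ is a \emph{reduced}-type object: it is defined as the C$^\ast$-algebra generated by the $j_i(\cA_i)$ acting on the concrete bimodule $E=\underset{i\in I}{\rhd}E_i$, and its norm depends on the faithfulness of the GNS representations of the $\phi_i$ --- a hypothesis of the theorem that your proposal never invokes. This is exactly the full-versus-reduced phenomenon familiar from free products: a u.c.p.\ map continuous for the universal norm which annihilates a set of algebraic relations need not factor through a reduced quotient (compare $C^\ast(F_2)\twoheadrightarrow C^\ast_r(F_2)$). Equivalently, what is missing is the estimate $\|\widetilde{\theta}(x)\|\leq\|\sigma(x)\|_{\cL(E)}$ for $x$ in the algebraic free product; your induction yields algebraic identities among words, never this norm bound. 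The paper itself warns that relation-counting is delicate here: immediately after Remark~\ref{remark42} it exhibits representations $u_i$ on the free product bimodule satisfying the monotone independence relations (a)--(c) of Proposition~\ref{prop41} but \emph{not} the peak relations, so $\cA$ is not presented by any algebraically checkable list of relations established in the paper, and continuity of $\widetilde{\theta}$ only lets you kill the closure of the algebraic peak ideal, which may be strictly smaller than $\ker\rho$.

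The paper's proof avoids descent entirely and constructs $\theta$ directly on $\cA$ as the compression of a $\ast$-homomorphism: each $\theta_i$ is dilated by forming the bimodule $F_i=\cA_i\otimes_{\pi_i\circ\theta_i}E_i$ with vector state $\rho_i=\langle\eta_i,\cdot\,\eta_i\rangle$ satisfying $\rho_i\circ\sigma_i=\phi_i$; the embedding result (Proposition~\ref{dykema111}, itself proved via the conditional expectations of Lemma~\ref{dykema1} and induced representations) then yields a $\ast$-homomorphism $\sigma:\cA\lra\underset{i\in I}{\rhd}(\cL(F_i),\rho_i)$ with $\sigma|_{\cA_i}=\sigma_i$, and one sets $\theta(x)=v^\ast\sigma(x)v$ for the isometry $v$ assembled from $v_i:\zeta\mapsto 1\otimes\zeta$. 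Complete positivity, unitality, and the correct norm behavior are then automatic because $\theta$ is a compression of a representation \emph{of $\cA$ itself}, and properties (i)--(iii) are checked by direct vector computations on $E$. Your observation that the correction terms of (\ref{eq3}) vanish in $D$ is worth retaining --- it explains why the relations (i)--(iii) are mutually consistent in the target --- but as a construction of $\theta$ the proposal needs, and does not supply, a proof that $\cA$ is the quotient of $\widehat{\Free}\cA_i$ by the peak relations; that identification is neither established in the paper nor expected to hold at this level of generality.
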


   \begin{proof}
 Let $ (\pi_I, E_i, \xi_i)=\text{GNS}(D_i,\psi_i)$, and $(E, \xi)=\rhd (E_i,\xi_i)$, as in the previous section.

 Consider the Hilbert $\cB$-bimodule $F_i=\cA_i\otimes_{\pi_i\circ \theta_i} E_i$ with the distinguished element $\eta_i=1\otimes\xi_i\in F_i$. The mapping $\theta_i$ restricts to the identity map on $\cB$, so in $F_i$ we have that $b\otimes \zeta=1\otimes (b\zeta)$ for every $b\in\cB$. Consider the unital $\ast$-homomorphism $\sigma_i:\cA_i\lra \cL(F_i)$
 \[
 \sigma_i(a_1)(a_2\otimes\zeta)=(a_1a)\otimes \zeta,\ \text{for all}\ a_1,a\in\cA_1, \zeta\in E_i
 \]
 and the map $\rho_i:\cL(F_i)\lra \cB$ given by $
 \rho_i(x)=\langle \eta_i,x\eta_i\rangle$.
 As in \cite{dykema1} we have that, identifying $\cB$ with $\sigma_i(\cB)\subseteq\cL(F_i)$, the map $\rho_i:\cL(F_i)\lra\cB$ is a conditional expectation, that $L^2(\cL(F_i),\rho_i)\cong F_i$, that the GNS representation of $\rho_i$ is faithful on $\cL(F_i)$ and that $\rho_i\circ\sigma_i=\phi_i$.

 Take now $(\mathcal{M},\rho)=\rhd(\cL(F_i),\rho_i)$ and note that (see \cite{mvPJM}) $\mathcal{M}\subseteq \cL(F)$, where $(F,\eta)=\rhd(F_i,\eta_i)$. By Proposition \ref{dykema111} there is a $\ast$-homomorphism $\sigma:\cA\lra\mathcal{M}$ such that $\sigma_{|\cA_i}=\sigma_i$.

  Consider the operator $v_i:E_i\lra F_i$ given by $\zeta\lra 1\otimes \zeta$. As shown in \ref{dykema1}, proof of Theorem 2.2, we have that $v_i$ is an adjointable (its adjoint being the operator $F_i\lra E_i$ sending $a\otimes\zeta$ to $\theta_i(a)\zeta$), that  $v_i(E_i^\circ)\subseteq F_i^\circ$ and $v_i^\ast v_i=1$. Since $\theta_i$ is a left $\gB$-bimodule map, $v_i(b\zeta)=1\otimes (b\zeta)=b\otimes \zeta=b(v(\zeta))$, for all $b\in\gB$, $\zeta\in E_i$.

  Taking direct sum of operators $v_{i_1}\otimes\cdots \otimes v_{i_n}$, we get that $v\in\cL(E)$ such that $\langle v\zeta, \zeta\rangle =\langle\zeta, \zeta\rangle$ for every $\zeta\in E$, that $v\xi=\eta$ and
  \[v(\zeta_1\otimes\cdots \otimes \zeta_n)=(v_{i_1}\zeta_1)\otimes\cdots \otimes (v_{i_n}\zeta_n), \ \text{whenever}\ \zeta_j\in E_{i_j}, i_1>\cdots >i_n.\]

  Let $\theta:\gA\lra \cL(E)$ be the unital completely positive map
  \[\theta(x)=v^\ast\sigma(x) v.\]

  We will show that $\theta$ satisfies the Theorem. In order to show that the diagram commutes, let $w_i:E\lra E_i\otimes_{\gB} E(i-1)$ and $y_i:F\lra F_i\otimes_\gB F(i-1)$ be the partial isometries that we used in the monotone product construction for the inclusions $\gA_i\hookrightarrow \gA$, respectively $\cL(F_i)\hookrightarrow\mathcal{M}$. Exactly as in \cite{dykema1}, note that $v_i(E(i-1))\subseteq F(i-1)$ and that $y_iv=(v_i\otimes v_{|E_i})w_i$.

  Hence, for $a\in\gA_i$, we have that
  \begin{eqnarray*}
  \theta(a)&=&v^\ast\sigma(a)v=v^\ast\sigma_i(a)v=v^\ast y_i[\sigma_i(a)\otimes 1_{F(i-1)}]y_iv\\
  &=&w_i^\ast[v_i\sigma_i(a)v_i\otimes (v_{|E(i-1)})^\ast v_{|E(i)}]w_i\\
  &=&w_i^\ast[\theta_i(a)\otimes 1-{E(i)}]w_i\\
  &=&\theta_i(a).
  \end{eqnarray*}
  
   It suffices to show (i)-(iii), since also imply that $\gA\subseteq\gD$, finishing the
proof.
  
   For (i) we need to show that 
   \begin{equation}\label{star}
   \theta(a_1\cdots a_n)\widetilde{\zeta}=\theta(a_1\cdots a_{n-1})\theta(a_n)\widetilde{\zeta}
   \end{equation}
  for all $\widetilde{\zeta}\in E$ and all $a_k\in\gA_{i_k}$, $i_s\neq i_{s+1}$ and $i_n>i_{n-1}$.
  
   To simplify the notations, for $a\in\gA_i$, we will write $a^\circ=a-\phi_i(a)\in\gA_i^\circ$.
  
  If $\widetilde{\zeta}=\xi$, (\ref{star}) becomes
  \begin{equation*}
  v^\ast\sigma(a_1\cdots a_n)v\xi=v^\ast\sigma(a_1\cdots a_{n-1}vv^\ast \sigma(a_n)v\xi.
  \end{equation*}
But
\begin{eqnarray*}
\sigma(a_1\cdots a_n)v\xi
&=&
\sigma(a_1\cdots a_n) (1\otimes\xi)=\sigma(a_1)\cdots\sigma(a_{n})(1\otimes\xi)\\
&=&
\sigma(a_1)\cdots\sigma(a_{n-1})(a_n\otimes\xi)=\sigma(a_1)\cdots\sigma(a_{n-1})[a_n^\circ+\phi_n(a_n))\otimes\xi]\\
&=&
\sigma(a_1)\cdots\sigma(a_{n-1})(1\otimes\phi_n(a_n)\xi)
\end{eqnarray*}
   \end{proof}
since $a_n^\circ\otimes\xi\in E_n^\circ$ and $\phi_n(a_n)\otimes\xi= 1\otimes\phi_n(a_n)\xi$. On the other hand
\begin{eqnarray*}
\sigma(a_1\cdots a_{n-1})vv^\ast\sigma(a_n)v\xi
&=&
\sigma(a_1\cdots a_{n-1})vv^\ast(a_n\otimes\xi)\\
&=&
\sigma(a_1\cdots a_{n-1})(1\otimes\widehat{\theta(a_n)})\\
&=&
\sigma(a_1\cdots a_{n-1})(1\otimes\widehat{\theta(a_n^\circ)}+1\otimes\phi_n(a_n)\xi)\\
&=&
\sigma(a_1\cdots a_{n-1})(1\otimes\phi_n(a_n)\xi)
\end{eqnarray*}  
 since $\widehat{\theta(a_n^\circ)}\in  E_n^\circ$.
   
   Suppose now that $\widetilde{\zeta}=\zeta_1\otimes\cdots \zeta_m$, with $\zeta_j\in E_{l_j}^\circ$, $l_1>\cdots>l_m$; we will use the notation $\widetilde{\zeta}^\prime$ for $\zeta_2\otimes\cdots \zeta_m$.
   
   If $l_1>i_n$, then both sides of (\ref{star}) are zero.
  If $l_1<i_n$, then \[\sigma(a_n)v\widetilde{\zeta}=\sigma(a_n)(1\otimes\xi_n)\otimes v\widetilde{\zeta}\]
   and the argument reduces to the case $\widetilde{\zeta}=\xi$.
   
   If $l_1=i_n$, then we have
   \begin{eqnarray*}
 \sigma(a_1\cdots a_{n-1})vv^\ast\sigma(a_n)v\xi
&=&  
\sigma(a_1\cdots a_{n-1})vv^\ast\sigma(a_n)(1\otimes\zeta_1)\otimes(v\widetilde{\zeta}^\prime)\\
&=&
\sigma(a_1\cdots a_{n-1})v\LARGE(\widehat{\theta(a_n)\zeta_1}\otimes\widetilde{\zeta}^\prime\LARGE)\\
&=&
\sigma(a_1\cdots a_{n-1})(1\otimes\widehat{\theta(a_n)\zeta_1})\otimes (v\widetilde{\zeta}^\prime)\\
&=&
\sigma(a_1\cdots a_{n-1})\langle\widehat{\theta(a_n)\zeta_1}, \xi_{i_n}\rangle (v\widetilde{\zeta}^\prime).
   \end{eqnarray*}
  
   On the ohter hand,
   \begin{eqnarray*}
   \sigma(a_1\cdots a_n)v^\ast\widetilde{\zeta}
   &=&\sigma(a_1\cdots a_{n-1})\sigma(a_n)\LARGE((1\otimes\zeta_1)\otimes(v\widetilde{\zeta}^\prime)\LARGE)\\
   &=&\sigma(a_1\cdots a_{n-1})\LARGE((a_n\otimes\zeta_1)\otimes(v\widetilde{\zeta}^\prime)\LARGE).
   \end{eqnarray*}
   
  $a_n\otimes\zeta_1$ decomposes as  $\langle a_n\otimes\zeta_1, 1\otimes\xi_{i_n}\rangle\xi_{i_n}+\eta$, with $\eta\in F_{i_n}^\circ$, therefore the equality above becomes
  \[ \sigma(a_1\cdots a_n)v^\ast\widetilde{\zeta}=\sigma(a_1\cdots a_{n-1})\langle a_n\otimes\zeta_1, 1\otimes\xi_{i_n}\rangle(v\widetilde{\zeta}^\prime).\]
  
  Since we defined $F_i$ as $\gA_i\otimes_{\pi_i\circ\theta_i}E_i$, we have that 
  $\langle a_n\otimes\zeta_1, 1\otimes\xi_{i_n}\rangle =\langle\widehat{\theta(a_n)}\zeta_1, \xi_{i_n}\rangle$, and the proof is property (i) is complete.
  
   For (ii), it suffices to prove the property for the biggest $k\in\{1, 2, \dots, n\}$ such that $i_{k-1}<i_k>i_{k+1}$; also, since (i) was proved, we can suppose that $i_{k+1}>\dots>i_n$. In this framework, we need to show that
   \[
   v^\ast\sigma(a_1\cdots a_n)v=v^\ast\sigma(a_1\cdots a_{k-1}\phi_{i_k}(a_k)a_{k+1}\cdots a_n)v,
   \]
 that is 
 \[v^\ast\sigma(a_1\cdots a_{k-1}a_k)\sigma(a_{k+1}\cdots a_n)v
 =v^\ast\sigma(a_1\cdots a_{k-1})\phi_{i_k}(a_k)\sigma(a_{k+1}\cdots a_n)v.
 \] 
  Since $i_k>i-{k+1}>\dots >i_n$, it follows that 
  $\sigma(a_{k+1}\cdots a_n)v\widetilde{\zeta}\in F(i_k)$, for all $\widetilde{\zeta}\in E$, hence the assertion is equivalent to the first three cases from the proof of property (i).
  
  For part (iii), we need to show that
  \[ v^\ast\sigma(a_1\cdots a_n)v=v^\ast\sigma(a_1)vv^\ast\sigma(a_2\cdots a_n)v
  \]
  whenever $i_1>i_2$. Since (i) and (ii) are proved, we can suppose that $i_1>i_2>\dots>i_n$.  In this framework we have that $\sigma(a_2\cdots a_n)v\widetilde{\zeta}\in F(i_1)$ for all $\widetilde{\zeta}\in E$, therefore it suffices to show that  
  \begin{equation}\label{star2}
  v^\ast\sigma(a_1)\eta=v^\ast\sigma(a_1)vv^\ast\eta\ \text{for all}\ \eta\in F(i_1).
  \end{equation}
  But $v^\ast\sigma(a_1)\eta=v^\ast(a_1\otimes\xi_{i_1})\otimes\eta=\theta(a_1)\xi_{i_1}\otimes v^\ast\eta$. Also, since $v^\ast\eta\in E(i_1)$, we have that $ v v^\ast\eta\in F(i_1)$, hence 
  \begin{eqnarray*}
  v^\ast \sigma(a_1)vv^\ast\eta&=& v^\ast \sigma(a_1)[(1\otimes\xi_{i_1})\otimes vv^\ast\eta]\\
  &=& v^\ast [(a_1\otimes\xi_{i_1})\otimes vv^\ast\eta]=\theta(a_1)\xi_{i_1}\otimes v^\ast v v^\ast \eta \\
  &=& \theta(a_1)\xi_{i_1}\otimes v^\ast \eta, \ \text{since}\ v^\ast v=\text{Id}.
  \end{eqnarray*}

\end{document}